\newcommand{\cP}{\ensuremath{\mathcal P}}
\newcommand{\eps}{\varepsilon}
\renewcommand{\rho}{\varrho}
\newcommand{\Gnp}{G(n, p)}
\newcommand{\ccl}{\ensuremath{\mathrm{ccl}}}
\newcommand{\vol}{\ensuremath{\mathrm{vol}}}
\declaretheorem[parent=section]{theorem}
\declaretheorem[sibling=theorem]{lemma}
\declaretheorem[sibling=theorem]{question}
\declaretheorem[sibling=theorem]{corollary}
\setlist{itemsep=0.1em, topsep=0.1em}
\colorlet{RoyalRed}{red!70!black}
\definecolor{RoyalBlue}{rgb}{0.25, 0.41, 0.88}
\definecolor{RoyalAzure}{rgb}{0.0, 0.22, 0.66}
\date{}
\title{Complete minors in graphs without sparse cuts}
\author{
	Michael Krivelevich
	\thanks{
		Sackler School of Mathematical Sciences, Tel Aviv University, Tel Aviv 6997801, Israel. Email: \texttt{krivelev@tauex.tau.ac.il}. Research supported in part by USA-Israeli BSF grant 2014361 and by ISF grant 1261/17.
	}
	\and
	Rajko Nenadov	
	\thanks{
		Department of Mathematics, ETH Zurich, Switzerland. Email: {\tt rajko.nenadov@math.ethz.ch}. Research supported in part by SNSF grant 200021-175573.
	}
}
\begin{document}
\maketitle

\begin{abstract}
We show that if $G$ is a graph on $n$ vertices, with all degrees comparable to some $d = d(n)$, and without a sparse cut, for a suitably chosen notion of sparseness, then it contains a complete minor of order
\[
	\Omega\left( \sqrt{\frac{n d}{\log d}} \right).
\]
As a corollary we determine the order of a largest complete minor one can guarantee in $d$-regular graphs for which the second largest eigenvalue is bounded away from $d/2$, in $(d/n, o(d))$-jumbled graphs, and in random $d$-regular graphs, for almost all $d = d(n)$.
\end{abstract}

\section{Introduction}

We say that a graph $G$ contains a graph $H$ as a \emph{minor} ($H \prec G$ for short) if we can obtain $H$ from $G$ by a series of contractions of edges and deletions of vertices and edges of $G$. Equivalently, $H \prec G$ if there are $v(H)$ disjoint subsets $\{T_h\}_{h \in V(H)}$ of $V(G)$ such that each $T_h$ induces a connected graph and there is an edge between $T_h$ and $T_{h'}$ for every $\{h, h'\} \in E(H)$. The \emph{contraction clique number} of $G$, denoted by $\ccl(G)$, is defined as the largest integer $r \in \mathbb{N}$ such that $K_r \prec G$. 

Due to the importance of minors in graph theory it is natural to expect abundance of various results and conjectures providing sufficient conditions for the existence of large complete minors. One such example is Hadwiger's conjecture from 1943 which states $\ccl(G) \ge \chi(G)$, where $\chi(G)$ denotes the chromatic number of $G$. If true, this would be a far reaching generalisation of the four-colour theorem \cite{appel1976every}. So far it has only been resolved in the case where $\chi(G) \le 6$ (see, e.g., \cite{toft1996survey}). For larger values of $\chi$ not even the weaker bound $\ccl(G) = \Omega(\chi(G))$ is known to be true. Currently the best bound is of order $\ccl(G) = \Omega(\chi(G) / \sqrt{\log (\chi (G)})$, obtained independently by Kostochka \cite{kostochka1984lower} and by Thomason \cite{thomason1984extremal}. Bollob\'as, Catlin, and Erd\H{o}s \cite{bollobas80almost} showed that if there exists a counter-example to the Hadwiger's conjecture then it has to be \emph{atypical}, that is the conjecture holds for almost all graph. More precisely, they showed that $G(n,1/2)$, a binomial random graph with edge probability $1/2$, with high probability satisfies $\ccl(G(n,1/2)) = \Theta(n / \sqrt{\log n})$ whereas it is known that $\chi(G(n,1/2)) = (1 \pm o(1)) n / (2 \log n)$ (see, e.g., \cite{frieze2015introduction}). 

Other examples include results on large complete minors in graphs with large girth \cite{diestel2004dense,krivelevich2009minors,kuhn2003minors,thomassen1983girth}, $K_{s,t}$-free graphs \cite{krivelevich2009minors,kuhn2004complete}, graphs without small vertex separators \cite{alon1990separator,kawarabayashi2010separator,plotkin1994shallow}, lifts of graphs \cite{drier2006minors}, random graphs \cite{fountoulakis2008order} and random regular graphs \cite{fountoulakis2009minors}, and others. Krivelevich and Sudakov \cite{krivelevich2009minors} studied the complete minors in graphs with good vertex expansion properties. We complement this by investigating a connection between the contraction clique number of a graph and its edge expansion properties. It turns out that, in some cases, this is the right parameter to look at: as a straightforward corollary of Theorem \ref{thm:main} we determine the order of magnitude of a largest clique minor one can guarantee in classes of random and pseudo-random graphs, for which a result from \cite{krivelevich2009minors} falls short by a factor of $O(\sqrt{\log n})$. Another advantage over vertex expansion is that often edge expansion is easier to verify. 

Let $G = (V, E)$ be a graph with $n$ vertices. Given a subset $S \subseteq V$, the \emph{edge-expansion} of $S$ is defined as
\[
	h(S) = \frac{e(S, V \setminus S)}{|S|}.
\]	
In other words, $h(S)$ denotes the average number of edges a vertex in $S$ sends across the cut. The \emph{Cheeger constant} $h(G)$ of $G$ is then defined as the smallest edge expansion over all subsets of size at most $n/2$: 
\[
	h(G) = \min\{ h(S) \colon S \subseteq V, |S| \le n/2\}.
\]
It is a standard exercise to show that every graph with $e$ edges admits a nearly balanced cut which contains at most $e(1/2 + o(1))$ edges, thus $h(G) \le d(1/2 + o(1))$ for any graph $G$ with average degree $d$. However, it could happen that in highly unbalanced cuts $(S,V \setminus S)$ we actually have a stronger expansion. To capture this, for $1 \le k \le n/2$ we introduce a \emph{restricted} Cheeger  constant $h_{k}(G)$ defined as follows:
\[
	h_k(G) = \min\{ h(S) \colon S \subseteq V, |S| \le k\}.
\]
With this notation at hand, we are ready to state our main result.

\begin{theorem} \label{thm:main}
	For every $\eps > 0$ there exist $\beta > 0$ and $n_0, d_0 \in \mathbb{N}$ such that the following holds for all $n \ge n_0$ and $d \ge d_0$. Let $G$ be a graph with $n \ge n_0$ vertices and maximum degree at most $d$. If $h(G) \ge \eps d$ and $h_{\eps n}(G) \ge (1/2 + \eps)d$ then
	\[
		\ccl(G) \ge \beta \sqrt{\frac{nd}{\log d}}.
	\]
\end{theorem}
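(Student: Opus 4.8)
The plan is to find many disjoint connected subsets $T_1,\dots,T_r$ of $V(G)$, each pair joined by an edge, with $r = \Theta(\sqrt{nd/\log d})$. The natural target size is $|T_i| \approx t := \Theta(\sqrt{n\log d / d})$, so that $r \cdot t \approx n$ and we use up essentially all the vertices. The key point is that each $T_i$ should not just be connected but should \emph{dominate} a positive fraction of the graph, or at least enough vertices that any two of them are adjacent; the standard device here is to make each $T_i$ a small subtree whose closed neighbourhood has size $\gg n/2$, or — more robustly — to build the $T_i$ greedily as expander-like clusters and use the expansion hypothesis to guarantee cross-edges.

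The main engine is BFS/expansion. Because $h_{\eps n}(G) \ge (1/2+\eps)d$, any set $S$ with $|S| \le \eps n$ sends at least $(1/2+\eps)d|S|$ edges out of $S$; since each outside vertex receives at most $d$ of them, $S$ has at least $(1/2+\eps)|S|$ neighbours outside $S$ — so neighbourhoods grow by a factor $(1+2\eps/1) $-ish until they reach size $\eps n$. Hence starting BFS from any vertex, after $O(\log n / \log(1+\eps)) = O(\log n)$ levels we reach a connected set of size $\eps n$; but that is too big. Instead I would run BFS only until the cluster has size $t = c\sqrt{n\log d/d}$: since expansion by a constant factor takes $O(\log t) = O(\log n)$ steps, a cluster of size $t$ can be found as a connected (in fact bounded-depth) subtree. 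The subtlety is that we must do this \emph{simultaneously and disjointly} for $r \approx n/t$ clusters. I would process vertices one cluster at a time, always working inside the graph induced on the as-yet-unused vertices; as long as we have used at most, say, $n/2$ vertices, the leftover graph $G'$ still satisfies a comparable restricted expansion (a set $S$ of size $\le \eps n/2$ inside $G'$ has $\ge (1/2+\eps)d|S| - |S|\cdot d/2 \cdot(\text{loss})$... ) — making this bookkeeping clean is exactly where the hypothesis $h(G)\ge \eps d$, not just $h_{\eps n}$, will be needed, to control how expansion degrades as vertices are deleted.

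For the cross-edges: I want every pair $T_i, T_j$ to be joined. The cleanest route is to ensure each final cluster $T_i$ has $|N(T_i)| > n/2$ in $G$; then any two clusters have neighbourhoods that overlap, hence (being connected) are at distance $\le 2$, and a short augmenting path absorbed into one of them creates the edge. To get $|N(T_i)| > n/2$ from a cluster of size only $t \ll n$, I would not stop BFS at size $t$ but instead: grow a connected "core" of size $t$, then observe that by the full Cheeger bound $h(G) \ge \eps d$ together with $h_{\eps n}(G) \ge (1/2+\eps)d$, the ball of radius $O(\log n)$ around the core already has size $> n/2$. Then $T_i$ itself is taken to be this larger ball — but now the balls are no longer disjoint. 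The resolution, which I expect to be the technical heart of the argument, is the standard "reservoir" trick: set aside a linear-sized reservoir $R$ of vertices at the start; build the $r$ disjoint cores of size $t$ outside $R$; and then route, for each pair $(i,j)$ or rather for each $i$, a connecting structure through $R$. Concretely one shows that each core, via its expansion into $R$, can be linked to a common "hub" inside $R$, or that the cores can be connected pairwise by internally-disjoint short paths in $R$ — this is where a Menger/flow argument on the bipartite graph between $\{$cores$\}$ and $R$, powered by the edge-expansion hypothesis, does the work.

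The hardest step, I expect, is not the existence of one dense connected cluster — that is a routine BFS-with-expansion computation — but making the \emph{disjointness and the cross-edges coexist}: every vertex of $G$ can lie in at most one $T_i$, yet each $T_i$ must reach more than half the vertices. The budget is tight precisely because $r \cdot t \approx n$ leaves essentially no slack, so the reservoir $R$ must be only a small constant fraction of $V$, and one must show that expansion through a \emph{linear but small} reservoir still suffices to connect all $\binom{r}{2}$ pairs. I would handle this by a potential/absorption argument: maintain the reservoir, and when connecting cores use each reservoir vertex a bounded number of times, appealing to $h(G) \ge \eps d$ to guarantee that the bipartite expansion from cores into the residual reservoir never collapses. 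Getting the constant $\beta = \beta(\eps)$ to come out positive is then just tracking the constants through the BFS depth $O(\log n / \log d)$ — note $\log n$ versus $\log d$: the cluster size $t$ involves $\log d$ because expansion is by a factor related to $d$-degrees, and this is what produces the $\sqrt{nd/\log d}$ rather than $\sqrt{nd/\log n}$.
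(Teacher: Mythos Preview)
There is a real gap at the step you flag as the hardest: ensuring cross-edges between all pairs of clusters while keeping the clusters disjoint and of size $O(t)$. Your two suggested mechanisms both fail. Making $|N(T_i)|>n/2$ is impossible for a set of size $t=\Theta(\sqrt{n\log d/d})$ once $d$ is moderately small, since $|N(T_i)|\le d\,|T_i|=\Theta(\sqrt{nd\log d})\ll n$; you notice this and retreat to a reservoir. But routing all $\binom r2 =\Theta(nd/\log d)$ pairs through a reservoir of linear size by internally disjoint paths is arithmetically infeasible as soon as $d\gg\log d$, and connecting every core to a single hub produces only a star minor, not $K_r$. Your explanation of why $\log d$ rather than $\log n$ appears is also wrong: BFS in a graph with $h(G)\ge\eps d$ expands neighbourhoods by the \emph{constant} factor $1+\eps$, not by a factor depending on $d$, so BFS depth to reach size $t$ is $\Theta(\log n)$, and a BFS-based argument on its own yields at best $\sqrt{nd/\log n}$.

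The paper's approach sidesteps the pairwise-routing bottleneck entirely. The sets $T_1,\dots,T_q$ are built one at a time, and the whole point is that the \emph{new} set $T_{q+1}$ is constructed, in one shot, to be adjacent to every existing $T_i$. The key lemma (proved via a lazy random walk plus short connecting paths, with mixing controlled by the Cheeger constant) says: given $q$ subsets $U_1,\dots,U_q$ of size $\ge s$ in an $n$-vertex expander, there is a single \emph{connected} set of size $O\big((n/s)\log(qs/n)\big)$ hitting all of them. One applies this with $U_i=N(T_i)\cap U$; the hypothesis $h_{\eps n}(G)\ge(1/2+\eps)d$ is used, via a separate greedy lemma, to guarantee each $T_i$ can be built with $|N(T_i)|=\Theta(td)$, so that $s=\Theta(td)$. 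The connected hitting set then has size
\[
O\!\left(\frac{n}{td}\,\log\frac{r\cdot td}{n}\right)=O\!\left(\frac{n}{td}\,\log d\right)=O(t),
\]
and \emph{this} is where the $\log d$ genuinely enters. Your proposal lacks this covering-by-a-single-random-walk idea, which is the heart of the argument; the reservoir/flow sketch is not a substitute for it.
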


It should be noted that in Theorem \ref{thm:main} we allow $d$ to depend on $n$. This is also the case in all other stated results.

Using a first-moment calculation, Fountoulakis, K\"uhn, and Osthus \cite{fountoulakis2008order} showed that with high probability
\begin{equation} \label{eq:random_ccl_upper}
	\ccl(\Gnp) \le (1 + o(1)) \sqrt{\frac{n^2 p}{\log (np)}},
\end{equation}
for $C/n \le p < 1/2$.\footnote{$\log$ denotes the natural logarithm.} For such $p$ we have that $\Gnp$ with high probability contains a large induced subgraph that satisfies assumptions of Theorem \ref{thm:main} with, say, $d = 1.1np$ and some parameter $\eps$, thus the bound on $\ccl(G)$ given in Theorem \ref{thm:main} is, in general, optimal up to a constant factor. 

While the second requirement on the expansion in Theorem \ref{thm:main} ($h_{\eps n}(G) \ge (1/2 + \eps)d$) might seem restrictive at first, it will easily be satisfied in all our applications. The role of this assumption will become apparent in the proof. It remains an interesting problem to determine if one can guarantee the same lower bound on $\ccl(G)$ only assuming $h(G) \ge \eps d$, where $d$ is the maximum degree of $G$. Adapting the proof of Theorem \ref{thm:main}, in \cite{krivelevichExpanders} we showed that this is the case when $d$ is a constant. 

\begin{theorem} \label{thm:constant_d}
	For every $\eps > 0$ there exist $\beta > 0$ and $n_0 \in \mathbb{N}$ such that the following holds. Let $G$ be a graph with $n \ge n_0$ vertices and maximum degree at most $d \ge 3$. If $h(G) \ge \eps d$ then
	\[
		\ccl(G) \ge \beta \sqrt{n}.
	\]
\end{theorem}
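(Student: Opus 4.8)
The plan is to reduce to Theorem \ref{thm:main} via a density-increment / regularization argument. The obstacle is that Theorem \ref{thm:main} needs two hypotheses ($h(G) \ge \eps d$ \emph{and} $h_{\eps n}(G) \ge (1/2+\eps)d$), while here we are only given $h(G) \ge \eps d$. When $d$ is a constant, however, we can afford a crude trick: we will take many vertex-disjoint copies of $G$ (or rather work inside $G$ itself, using that $d = O(1)$) and observe that on any small set $S$, the number of edges leaving $S$ is at least $h(G)|S| \ge \eps d |S|$, but we would like it to be at least $(1/2+\eps')d|S|$ on sets of size up to $\eps' n$. Since $d$ is bounded, the values $h(S)$ lie in the discrete set $\{j/|S| : 0 \le j \le d|S|\}$, and the key point is that a \emph{constant-degree} expander is, after passing to a linear-sized subgraph, much more strongly expanding on small sets than the global Cheeger bound suggests.

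More precisely, the first step is the following dichotomy, applied iteratively. Suppose there is a set $S$ with $|S| \le \eps' n$ and $h(S) < (1/2+\eps')d$; remove $S$ from $G$. The point is that removing $S$ keeps the Cheeger constant of the remaining graph $G' = G - S$ essentially intact: since $|S|$ is small and every vertex has degree at most $d = O(1)$, for any $T \subseteq V(G')$ with $|T| \le |V(G')|/2$ we have $e_{G'}(T, V(G')\setminus T) \ge e_G(T, V\setminus T) - d|S| \ge \eps d |T| - d|S|$, which is still $\ge (\eps/2) d |T|$ provided $|T|$ is not too small; the finitely many small $T$ can be handled separately because $G'$ inherits that it is connected and has bounded degree, so tiny sets trivially have large expansion (a set of size $t$ sends at least $\delta(G') t - t(t-1) \ge$ something positive across, and in fact one shows $h_{k_0}(G') \ge (1/2+\eps')d$ for an absolute constant $k_0$ by a direct argument using $\delta(G) \ge \eps d$). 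Iterating, after at most $O(1/\eps')$ rounds of peeling we either (a) reach a subgraph $G^* \subseteq G$ on at least $n/2$ vertices with $h(G^*) \ge (\eps/2) d$ and $h_{\eps'' n}(G^*) \ge (1/2+\eps'')d$ for suitable $\eps'', \eps'$ depending on $\eps$, in which case Theorem \ref{thm:main} (with maximum degree $d \le d_0$ a constant, absorbed into $\beta$) gives $\ccl(G) \ge \ccl(G^*) \ge \beta' \sqrt{|G^*| d / \log d} \ge \beta \sqrt n$; or (b) the peeling removes a positive fraction of the vertices, which would contradict $|S| \le \eps' n$ at each of the boundedly many steps once $\eps'$ is chosen small relative to $\eps$.

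The second step is to make precise why case (b) cannot persist: each peeled set has size at most $\eps' n$, we peel at most $O(1/\eps')$ times (here one must check the budget carefully — the natural bookkeeping is to track the total number of edges removed, which is at most $d \eps' n$ per step and at most $\eps d n / 10$ in total, chosen so that the surviving graph still has $\Omega(dn)$ edges and $\Omega(n)$ vertices), so the surviving graph has $\ge n/2$ vertices. The main obstacle is establishing $h_{k}(G^*) \ge (1/2+\eps'')d$ for \emph{all} $k$ up to a linear threshold after peeling, including the regime of \emph{small} $k$ (constant-sized $T$): for those one cannot rely on averaging and must instead use the minimum degree bound $\delta(G^*) \ge \delta(G) - d|S_{\text{total}}|$, which is still $\ge \eps d /2$, together with the fact that a set of $t$ vertices induces at most $\binom{t}{2}$ edges, hence sends at least $\delta(G^*) t - t(t-1) \ge (1/2+\eps'')d\, t$ across the cut as long as $t \le (\eps/2 - 1/2 - \eps'')d + 1$ — which, since $d$ is a \emph{constant}, is automatically satisfied for all relevant $t$ once we note that $\eps d$ may be assumed $> 1$ (else $h(G) \ge \eps d$ forces $h(G) > 0$, i.e. $h(G) \ge 1/|S| \ge 2/n$, and a trivial connectivity argument gives $\ccl(G) = \Omega(\sqrt n)$ directly from $G$ being a bounded-degree expander). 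Assembling these pieces, with all the $\eps$-dependent constants ($\eps', \eps'', k_0$, and the final $\beta$) chosen in the order $\eps \rightsquigarrow \eps'' \rightsquigarrow \eps' \rightsquigarrow \beta$ and with $d$ treated as a constant bounded by the $d_0$ from Theorem \ref{thm:main}, completes the proof.
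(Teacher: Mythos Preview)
Your reduction is circular. Theorem~\ref{thm:main} carries the hypothesis $d \ge d_0(\eps)$, and the whole point of Theorem~\ref{thm:constant_d} in the paper is to cover the complementary range $3 \le d < d_0$ (this is exactly how Corollary~\ref{cor:lambda2} splits into two cases). No peeling of the vertex set will raise the maximum degree from $3$ to $d_0$, so Theorem~\ref{thm:main} simply cannot be invoked as a black box here. The paper does not reduce to Theorem~\ref{thm:main}; it \emph{reopens} that proof and removes the two ingredients that created both the $d \ge d_0$ requirement and the need for $h_{\eps n}(G) \ge (1/2+\eps)d$: namely, the strong neighbourhood bound $|N(T_i)| \ge t(1/2+2\zeta)d$ in property~\ref{p:connected} and the accompanying use of Lemma~\ref{lemma:conn_expand}. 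With only $e(T_i,U) > \eps t d$ retained, one gets $|N(T_i)\cap U| \ge \eps t$ rather than $\Omega(td)$, so in Lemma~\ref{lemma:covering} one must take $s = \Theta(t)$; setting $t,r = \Theta(\sqrt{n})$ then makes the covering set have size $O((n/t)\log(rs/n)) = O(\sqrt{n})$, and the argument closes without ever needing the extra hypothesis or a lower bound on $d$.

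Even ignoring the circularity, the peeling step does not work as written. Removing a set $S$ of size up to $\eps' n$ can kill the minimum degree of the remainder outright: a vertex of $G'$ may have had all of its (at most $d$) neighbours inside $S$. Thus the line ``$\delta(G^*) \ge \delta(G) - d|S_{\text{total}}|$'' is both dimensionally off and useless (the right-hand side is nonpositive once $|S_{\text{total}}|\ge 1$), and with it the handling of constant-sized $T$ via ``$\delta(G^*)t - t(t-1)$'' collapses. The claim that peeling terminates after $O(1/\eps')$ rounds is also unjustified: nothing prevents fresh bad sets from appearing after each deletion, and your edge-budget bookkeeping permits $\Theta(dn)$ edges to be removed. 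Finally, Theorem~\ref{thm:constant_d} is stated for all $d \ge 3$, including $d = d(n)$; your argument treats $d$ as an absolute constant throughout.
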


Note that Theorem \ref{thm:constant_d} is applicable with any $d = d(n)$, however, as already remarked, it is most likely optimal only in the case $d$ is a constant.

\subsection{Applications}

As a straightforward corollary of Theorem \ref{thm:main}, we improve and extend (and re-prove) several results. Previous proofs of some of these results relied on more specific, and difficult to show,  properties of studied graphs.

One attractive corollary of Theorem \ref{thm:main} relates the size of a largest complete minor in a $d$-regular graph to the second largest eigenvalue $\lambda_2$ of its adjacency matrix. This is not surprising knowing that $\lambda_2$ governs the number of edges in a cut (see, e.g., \cite[Theorem 9.2.1]{alon2016probabilistic}).

\begin{corollary} \label{cor:lambda2}
	For every $\eps > 0$ there exist $\beta > 0$ and $n_0 \in \mathbb{N}$ such that the following holds. Let $G$ be a $d$-regular graph with $n \ge n_0$ vertices, for some $d \ge 3$, and let $\lambda_2$ be the second largest eigenvalue of the adjacency matrix of $G$. If $\lambda_2 < (1/2 - \eps)d$, then $\ccl(G) \ge \beta \sqrt{n d / \log d}$.
\end{corollary}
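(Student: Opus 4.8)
The plan is to show that the spectral hypothesis $\lambda_2 < (1/2-\eps)d$ forces both edge-expansion conditions of Theorem~\ref{thm:main}, and then to invoke Theorem~\ref{thm:main} for large $d$ and Theorem~\ref{thm:constant_d} for bounded $d$. The key input is the standard estimate relating the second eigenvalue of a $d$-regular graph to the size of its cuts (cf.\ \cite[Theorem~9.2.1]{alon2016probabilistic}): for every $S \subseteq V$,
\[
	e(S, V \setminus S)\ \ge\ (d - \lambda_2)\,\frac{|S|\,(n - |S|)}{n}.
\]
I would derive this by applying the variational characterisation of the second-smallest Laplacian eigenvalue to the vector $x = \1_S - (|S|/n)\,\1$, which is orthogonal to the all-ones eigenvector: then $e(S,V\setminus S) = \sum_{\{u,v\}\in E}(x_u-x_v)^2 = x^{T}(dI-A)x \ge (d-\lambda_2)\|x\|^2 = (d-\lambda_2)|S|(n-|S|)/n$. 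Only $\lambda_2$ — and not the most negative eigenvalue — enters this one-sided estimate, which is precisely what the hypothesis controls.

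Dividing by $|S|$ gives $h(S) \ge (d-\lambda_2)(1-|S|/n) > (1/2+\eps)\,d\,(1-|S|/n)$. For $|S| \le n/2$ this yields $h(G) \ge d/4$, and for $|S| \le \delta n$ it yields $h(S) \ge (1/2+\eps)(1-\delta)d \ge (1/2+\eps/2)d$ once $\delta = \delta(\eps)$ is small enough (for instance $\delta = \eps/(1+2\eps)$). Hence, with $\eps_0 := \min\{1/4,\ \eps/2,\ \delta\}$, the graph $G$ satisfies $h(G) \ge \eps_0 d$ and $h_{\eps_0 n}(G) \ge (1/2+\eps_0)d$, so Theorem~\ref{thm:main} applies with parameter $\eps_0$.

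To finish, let $\beta_0, n_0, d_0$ be the constants Theorem~\ref{thm:main} assigns to $\eps_0$. If $d \ge d_0$, Theorem~\ref{thm:main} gives $\ccl(G) \ge \beta_0\sqrt{nd/\log d}$ directly. If $3 \le d < d_0$, then $d$ is bounded in terms of $\eps$, and I would instead apply Theorem~\ref{thm:constant_d} with expansion parameter $1/4$ (valid since $h(G) \ge d/4$) to get $\ccl(G) \ge \beta'\sqrt n \ge (\beta'\sqrt{\log 3}/\sqrt{d_0})\sqrt{nd/\log d}$, using $\log d / d > \log 3 / d_0$ on this range. Taking $\beta$ to be the smaller of $\beta_0$ and $\beta'\sqrt{\log 3}/\sqrt{d_0}$, and $n_0$ large enough for both theorems, completes the argument. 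I do not anticipate a genuine obstacle here: the substance lives in Theorems~\ref{thm:main} and~\ref{thm:constant_d}, and the only care needed is to use the one-sided cut bound (since the hypothesis says nothing about the smallest eigenvalue) and to track the constants through the case split on $d$.
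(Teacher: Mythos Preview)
Your proposal is correct and follows essentially the same route as the paper: derive the one-sided cut bound $e(S,V\setminus S)\ge (d-\lambda_2)|S|(n-|S|)/n$ (the paper cites \cite[Theorem~9.2.1]{alon2016probabilistic}), read off $h(G)\ge d/4$ and $h_{\delta n}(G)\ge (1/2+\eps/2)d$ for small $\delta$, then apply Theorem~\ref{thm:main} for $d\ge d_0$ and Theorem~\ref{thm:constant_d} for $3\le d<d_0$. The only differences are cosmetic---the paper takes $\delta=\eps/4$ rather than your $\eps/(1+2\eps)$ and is terser about the constant-tracking in the bounded-$d$ case.
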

\begin{proof}
	By \cite[Theorem 9.2.1]{alon2016probabilistic} we have that $e(A, B) \ge (d - \lambda_2)|A||B|/n$ for any disjoint subsets $A, B \subseteq V(G)$. Therefore, for any $S \subseteq V(G)$ we have
	\[
		\frac{e(S, V(G) \setminus S)}{|S|} \ge (1/2 + \eps)d \cdot \frac{|S|(n-|S|)}{|S| n} = (1/2 + \eps)d \cdot (1 - |S|/n).
	\]
	For $|S| \le n/2$ this shows $h(S) \ge d/4$, thus $h(G) \ge d/4$. On the other hand, for $|S| \le \eps n / 4$ we have $h(S) > (1/2 + \eps/2)d$, thus $h_{\eps n / 4}(G) \ge (1/2 + \eps/2)d$. The conclusion of the corollary now follows from Theorem \ref{thm:main} if $d \ge d_0$, for some (large) constant $d_0$, or from Theorem \ref{thm:constant_d} otherwise.
\end{proof}

Using known bounds on the likely value of $\lambda_2$ of the adjacency matrix of a random $d$-regular graph \cite{broder1998optimal,krivelevich2001random,tikhomirov2016spectral} (these results bound the second largest \emph{absolute} eigenvalue, which is stronger than what we need) in the case $d \ge d_0$, for sufficiently large constant $d_0$, and a result by Bollob\'as \cite{bollobas1988isoperimetric} on the Cheeger constant of random $d$-regular graphs for $d < d_0$, we immediately obtain the following result from Corollary \ref{cor:lambda2} and Theorem \ref{thm:constant_d}.

\begin{corollary} \label{cor:regular}
	For any $d \ge 3$, a $d$-regular graph $G_d$ chosen uniformly at random among all $d$-regular graphs with $n$ vertices with high probability satisfies
	\[
		\ccl(G) = \Omega\left( \sqrt{\frac{nd}{\log d}} \right).
	\]		
\end{corollary}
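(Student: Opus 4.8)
The plan is to split into two regimes according to whether $d$ exceeds a suitable absolute constant $d_1$. For $d \ge d_1$ everything will run through Corollary~\ref{cor:lambda2}; for the finitely many smaller values of $d$ that corollary is unavailable (a random $3$-regular graph, for instance, has $\lambda_2 \approx 2\sqrt2 > 3/2$, so its second eigenvalue is \emph{not} bounded away from $d/2$), and I would fall back on Theorem~\ref{thm:constant_d}. The point of the split is that when $d$ is bounded the two quantities $\sqrt n$ and $\sqrt{nd/\log d}$ agree up to a constant factor, so the weaker conclusion of Theorem~\ref{thm:constant_d} already suffices there.

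For the regime $d \ge d_1$, first fix $\eps = 1/4$. The cited estimates on the spectrum of random regular graphs \cite{broder1998optimal,krivelevich2001random,tikhomirov2016spectral} supply an absolute constant $C$ for which, for every admissible $d = d(n)$, a uniformly random $d$-regular graph $G$ on $n$ vertices satisfies $\lambda_2 \le \lambda(G) \le C\sqrt d$ with high probability (these works bound the second largest \emph{absolute} eigenvalue, which is more than is needed). Choosing $d_1$ to be any constant larger than $16C^2$ then guarantees $\lambda_2 < d/4 = (1/2 - \eps)d$ with high probability whenever $d \ge d_1$, and since $n \to \infty$ the hypothesis $n \ge n_0(\eps)$ from Corollary~\ref{cor:lambda2} eventually holds. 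Corollary~\ref{cor:lambda2}, applied with this $\eps$, then gives $\ccl(G) \ge \beta\sqrt{nd/\log d}$ with high probability.

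For the regime $3 \le d < d_1$ there are only finitely many values of $d$. For each, Bollob\'as's result \cite{bollobas1988isoperimetric} on the Cheeger constant (isoperimetric number) of random regular graphs yields a constant $c_d > 0$ with $h(G) \ge c_d$, and hence $h(G) \ge (c_d/d_1) d$, with high probability. Setting $\eps_0 = \min_{3 \le d < d_1} c_d / d_1 > 0$ and applying Theorem~\ref{thm:constant_d} with $\eps = \eps_0$ gives $\ccl(G) \ge \beta'\sqrt n$ with high probability; since $d \mapsto d/\log d$ is bounded on $[3, d_1)$, this is $\Omega(\sqrt{nd/\log d})$ with an absolute implicit constant. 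Together the two regimes give the corollary.

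The work here is bookkeeping rather than mathematics. The one thing to be careful about is that the multiplicative constant $C$ in the spectral bound must be \emph{absolute}, i.e.\ independent of both $n$ and $d$, so that a single threshold $d_1$ works uniformly in $n$; and one must confirm that the quoted spectral estimates, taken together, cover the entire admissible range $d_1 \le d \le n-1$ --- this is where the several cited papers, each valid on its own band of densities, are stitched together. No idea beyond Theorem~\ref{thm:main} (through Corollary~\ref{cor:lambda2}) and the standard random-regular-graph facts is needed.
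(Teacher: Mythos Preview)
Your proposal is correct and follows essentially the same approach as the paper: split at a large constant threshold $d_1$ (the paper calls it $d_0$), use the spectral bounds on random regular graphs together with Corollary~\ref{cor:lambda2} above the threshold, and fall back on Bollob\'as's isoperimetric result together with Theorem~\ref{thm:constant_d} below it. Your write-up is in fact slightly more explicit than the paper's (which just says ``we immediately obtain''), and your caveats about the absoluteness of $C$ and the stitching of the spectral references across the density range are well placed.
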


This extends a result by Fountoulakis, K\"uhn, and Osthus \cite{fountoulakis2009minors} who showed the same statement for constant $d \ge 3$ (whereas we allow $d$ to be a function of $n$). Optimality of Corollary \ref{cor:regular} can be derived as follows: Calculations in \cite{fountoulakis2008order} show that with probability at least $1 - \exp(- C n \log (np))$ we have $\ccl(G(n,p)) = O(\sqrt{n^2p / \log (np)})$, for a constant $C$ of our choice (having an impact on the hidden constant in $O(\cdot)$). For $d = d(n) < n/2$, the number of (labelled) $d$-regular graphs with $n$ vertices is of order 
\[
	\Theta\left(  (q^{q} (1 - q)^{1 - q})^{\binom{n}{2}} \binom{n-1}{d}^n \right)
\]
where $q = d / n$ (see \cite{liebenau2017asymptotic,mckay1990asymptotic,mckay1991asymptotic}). A simple calculation using Stirling's approximation shows that by taking $p = d / n$ the random graph $\Gnp$ is $d$-regular with probability at least $\exp(- C' n \log d)$, for some absolute constant $C' > 0$. Therefore we conclude that a random $d$-regular graph $G_d$ satisfies $\ccl(G_d) = O(\sqrt{nd/\log d})$ with high probability.





As our last application we mention a problem of estimating the contraction clique number in \emph{jumbled} graphs, first studied by Thomason \cite{thomason1987pseudo}. Let $G$ be a graph with $n$ vertices and let $p = p(n) \in (0,1)$ and $\beta = \beta(n) > 1$. We say that $G$ is \emph{$(p, \beta)$-jumbled} if for every subset $X \subseteq V(G)$ we have
\[
	\left| e(X) - p\binom{|X|}{2} \right| \le \beta |X|,
\]	
where $e(X)$ denotes the number of edges of $G$ with both endpoints in $X$. 
Krivelevich and Sudakov \cite{krivelevich2009minors} showed that every $(p, o(np))$-jumbled graph contains a complete minor of order
\begin{equation} \label{eq:random_ccl}
	\ccl(\Gnp) = \Omega\left( \sqrt{\frac{n^2 p}{\log (n \sqrt{p})}} \right),
\end{equation}
and the question of whether this bound can be improved to $\Omega(\sqrt{n^2 p / \log (np)}$ was raised in \cite{fountoulakis2009minors}. Note that this matches the bound in \eqref{eq:random_ccl} in case $p = n^{c}$ for a constant $0 < c < 1$, while for $p = C/n$ it falls short by a factor of $\sqrt{\log n}$. Here we settle it in the affirmative for all $p = \Omega(1/n)$.

\begin{corollary}
	Let $G$ be a $(p, o(np))$-jumbled graph with $n$ vertices, for some $C / n \le p \le 1$ where $C$ is a sufficiently large constant. Then 
	\[
		\ccl(G) = \Omega\left( \sqrt{\frac{n^2 p}{\log (np)}} \right).
	\]		
\end{corollary}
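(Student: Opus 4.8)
The plan is to deduce this from Theorem~\ref{thm:main}. We may assume $np \to \infty$, since otherwise the hypothesis is vacuous for large $n$ (jumbledness requires $\beta \ge 1$ while $\beta = o(np)$). Since Theorem~\ref{thm:main} has the form ``for every $\eps>0$ there are $\beta_0,n_0,d_0$ so that$\dots$'', it is enough to fix one sufficiently small $\eps>0$, set $d := \lceil (1+\eps)pn \rceil$, and exhibit an induced subgraph $G' \subseteq G$ on $n' = \Omega(n)$ vertices with $\Delta(G') \le d$, $h(G') \ge \eps d$ and $h_{\eps n'}(G') \ge (1/2+\eps)d$: then $d = \Theta(np)\to\infty$ and $\log d = \Theta(\log(np))$ give $\ccl(G)\ge\ccl(G') \ge \beta_0\sqrt{n'd/\log d} = \Omega\big(\sqrt{n^2p/\log(np)}\big)$. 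Throughout, $\beta = o(np)$ denotes the jumbledness parameter and $\eta = \eta(\eps)>0$ a further, smaller, constant.

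First I would record the standard consequences of jumbledness. Applying the defining inequality to $X$, $Y$ and $X \cup Y$ gives $|e(X,Y) - p|X||Y|| \le 2\beta(|X|+|Y|)$ for disjoint $X,Y$, hence $e(S, V\setminus S) \ge p|S|(n-|S|) - 2\beta n$ for every $S \subseteq V$. Second, for any threshold $\tau < pn$ bounded away from $pn$, comparing $\sum_{v\in U} d(v) = 2e(U) + e(U, V\setminus U)$ with its jumbled lower bound $p|U|n - O(\beta n)$ shows that the set $U$ of vertices of degree $<\tau$ (and, symmetrically, the set $W$ of vertices of degree $>(1+\eta)pn$) has size $O(\beta/(\eta p)) = o(n)$; taking $\tau = (1-\eta)pn$ we conclude that $G$ is nearly regular off a set $W\cup U$ of $o(n)$ vertices, and moreover $\sum_{x\in W\cup U} d(x) = O(\beta n) = o(pn^2)$.

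The heart of the argument is a pruning step. Delete $W$ (this already caps the maximum degree at $d$) and then run the cascade that repeatedly deletes any vertex with more than $\eta pn$ neighbours in the currently-deleted set, starting from the seed $W\cup U$; let $Q$ be the set of all deleted vertices and $G' = G - Q$. By construction every $v\notin Q$ has at most $\eta pn$ neighbours in $Q$, and since $v\notin W\cup U$ this gives $(1-2\eta)pn \le d_{G'}(v) \le d$; in particular $G'$ is near-regular with $\delta(G') > (1/2+\eps)d$ provided $\eta$ was chosen small enough relative to $\eps$. The remaining, and main, point is $|V(G')| = \Omega(n)$. On one hand, each vertex of $V\setminus Q$ has $\le \eta pn$ neighbours in $Q$, so $e(Q, V\setminus Q) \le \eta pn\,|V\setminus Q|$; against the jumbled lower bound $e(Q, V\setminus Q) \ge p|Q|(n-|Q|) - 2\beta n$ this yields $|Q|(n-|Q|) \le (\eta + o(1))n^2$, hence $|Q| \le (\eta+o(1))n$ or $|Q| \ge (1-\eta-o(1))n$. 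On the other hand one has the ``no sparse cut'' fact that \emph{any} nonempty induced subgraph $H$ with $\delta(H)\ge (1-2\eta)pn$ spans $\ge (1-2\eta-o(1))n$ vertices: comparing $e(V(H)) \ge \tfrac12 |V(H)|(1-2\eta)pn$ with the jumbled upper bound $e(V(H)) \le \tfrac p2 |V(H)|^2 + \beta|V(H)|$ forces $|V(H)| \ge (1-2\eta)n - 2\beta/p$. Applied to $H=G'$ this excludes the second alternative (and, in degenerate form, rules out the cascade deleting everything), so $|V(G')| \ge (1-\eta-o(1))n =: n'$.

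Finally I would verify the two expansion hypotheses for $G'$, which is routine bookkeeping with the jumbledness inequality. For $|S|\le \eps n'$, using $e(S) \le \tfrac p2|S|^2 + \beta|S|$ we get $h_{G'}(S) \ge \delta(G') - p|S| - 2\beta \ge (1-2\eta)pn - \eps pn - o(pn) \ge (1/2+\eps)d$ for $n$ large (again choosing $\eta$ small); for $\eps n' \le |S| \le n'/2$, writing cross-edges of $G'$ as cross-edges of $G$ minus the at most $\eta pn\,|S| + o(pn^2)$ edges from $S$ into $Q$ and invoking the jumbled lower bound on $e(S, V\setminus S)$ gives $h_{G'}(S) = \Omega(pn) \ge \eps d$. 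Hence $G'$ meets the hypotheses of Theorem~\ref{thm:main}, and we are done. The hard part is the pruning: the convexity bound $|Q|(n-|Q|) \le (\eta+o(1))n^2$ by itself only confines $|Q|$ to $o(n)$ or almost all of $V$, leaving open the possibility that deleting the few abnormal-degree vertices triggers a chain reaction eroding the whole graph — and it is precisely the jumbledness, in the guise of the lower bound $e(V(H)) \ge \tfrac12|V(H)|\delta(H)$ pitted against the jumbled upper bound on $e(V(H))$, that kills this and the attendant degenerate cases.
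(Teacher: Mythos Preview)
Your approach is the paper's: pass to a near-regular induced subgraph $G'$ on $(1-o(1))n$ vertices with good edge expansion, then invoke Theorem~\ref{thm:main}. The paper does this in one line by quoting \cite[Lemma~6.1]{krivelevich2009minors}, which directly furnishes such a $G'$ with $e(S,V(G')\setminus S)\ge(1-o(1))p|S|(n'-|S|)$ for every $S$; you instead rederive an equivalent statement from scratch via a cascade deletion. Either route feeds into the same routine verification of the hypotheses of Theorem~\ref{thm:main}, which you carry out correctly.

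There is, however, one small gap in your cascade argument. Your dichotomy $|Q|\le(\eta+o(1))n$ or $|Q|\ge(1-\eta-o(1))n$ is fine, and the minimum-degree bound $|V(H)|\ge(1-2\eta)n-2\beta/p$ for any nonempty $H$ with $\delta(H)\ge(1-2\eta)pn$ does exclude the second branch \emph{when $G'\neq\emptyset$}. But the parenthetical claim that the same bound ``in degenerate form rules out the cascade deleting everything'' is not justified: the bound is vacuous for $H=\emptyset$, and intermediate sets $V\setminus Q_j$ during the cascade carry no minimum-degree guarantee. The fix is a direct edge count on the cascade itself. Run it one vertex at a time; each added vertex contributes more than $\eta pn$ back-edges into the current $Q$, so if the process ever reached $|Q_j|=2|Q_0|$ one would have $e(G[Q_j])>\eta pn\,|Q_0|$, whereas jumbledness gives $e(G[Q_j])\le 2p|Q_0|^2+2\beta|Q_0|$. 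Dividing by $|Q_0|$ forces $\eta pn\le 2p|Q_0|+2\beta=o(pn)$, a contradiction. Hence $|Q|<2|Q_0|=o(n)$ outright, which in fact renders your dichotomy step unnecessary.
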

\begin{proof}
	By \cite[Lemma 6.1]{krivelevich2009minors}, every $(p, o(np))$-jumbled graph $G$ contains an induced subgraph $G'$ with $n' = (1 - o(1))n$ vertices such that
	\[
		e(S, V(G') \setminus S) \ge (1 - o(1)) p |S| (n' - |S|)
	\]
	for every $S \subseteq V(G')$. Therefore $G'$ satisfies the requirement of Theorem \ref{thm:main} for, say, $\eps = 0.1$. 
\end{proof}

\section{Preliminaries}

We use standard graph-theoretic notation. In particular, given a graph $G$ and a vertex $v \in V(G)$, we denote by $N(v)$ the neighbourhood of $v$. Given a subset $S \subseteq V(G)$, we abbreviate with $N(S)$ the \emph{external} neighbourhood of $S$, that is
\[
	N(S) = \left( \bigcup_{v \in S} N(v) \right) \setminus S.
\]
The distance between two vertices in $G$ is defined as the length (number of edges) of a shortest path between them (thus every vertex is at distance 0 from itself). We say that a subset of vertices $S \subseteq V(G)$ is \emph{connected} if it induces a connected subgraph of $G$.

We start by relating the Cheeger constant to the vertex expansion of a graph and, as a corollary, give a lower bound on the size of a ball around a subset of vertices.

\begin{lemma} \label{lemma:expansion}
	Let $G$ be a graph with $n$ vertices and maximum degree $d = d(n)$. Then for every subset $X \subseteq V(G)$ of size $|X| \le n/2$ we have
	\[
		|N_G(X)| \ge h(G)|X| / d.
	\]		
\end{lemma}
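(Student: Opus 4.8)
The plan is to sandwich the size of the edge boundary $e(X, V(G)\setminus X)$ between two elementary bounds: a lower bound coming from the definition of the Cheeger constant, and an upper bound coming from the maximum degree.

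First I would observe that since $|X| \le n/2$, the set $X$ is among those over which the minimum defining $h(G)$ is taken, so $h(X) \ge h(G)$; unwinding the definition of $h(X)$ this reads $e(X, V(G)\setminus X) \ge h(G)\,|X|$. Next, every edge counted by $e(X, V(G)\setminus X)$ has exactly one endpoint outside $X$, and that endpoint lies in $N_G(X)$ by definition of the external neighbourhood. Since each vertex of $G$ has degree at most $d$, a fixed vertex of $N_G(X)$ is incident to at most $d$ such crossing edges, whence $e(X, V(G)\setminus X) \le d\,|N_G(X)|$.

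Combining the two inequalities gives $d\,|N_G(X)| \ge e(X, V(G)\setminus X) \ge h(G)\,|X|$, and dividing through by $d$ yields $|N_G(X)| \ge h(G)\,|X| / d$, as required.

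The argument is a one-line double counting of the cut edges, so there is no genuine obstacle here; the only point requiring the hypothesis $|X| \le n/2$ is the first inequality, which is exactly where the restriction in the definition of $h(G)$ is used.
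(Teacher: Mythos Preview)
Your proof is correct and follows exactly the same approach as the paper: bound $e(X, V(G)\setminus X)$ below by $h(G)|X|$ using the definition of the Cheeger constant (valid since $|X|\le n/2$), bound it above by $d\,|N_G(X)|$ using the maximum degree, and combine.
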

\begin{proof}
	By the definition of $h(G)$ there are at least $|X| h(G)$ edges between $X$ and $V(G) \setminus X$. The desired inequality follows from the fact that every vertex in $V(G) \setminus X$ is incident to at most $d$ of these edges. 
\end{proof}

The following lemma shows that the ball around a subset of vertices grows exponentially until it expands to at least half of the vertex set. 
\begin{lemma} \label{lemma:ball}
	Let $G$ be a graph with $n$ vertices and maximum degree $d = d(n)$. Then for every subset of vertices $U \subseteq V(G)$ and any $i \in \mathbb{N}$, the set
	\[
		B(U,i) = \{v \in V(G) \colon v \text{ is at distance at most } i \text{ from some } u \in U\}
	\]
	is of size at least 
	\[
	|B(U, i)| \ge \min(n/2, |U|(1 + h(G)/d)^i).
	\]
\end{lemma}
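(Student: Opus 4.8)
The plan is to prove the bound by induction on $i$, with Lemma \ref{lemma:expansion} providing the multiplicative growth at each step. For the base case $i = 0$ we have $B(U, 0) = U$, and $|U| \ge \min(n/2, |U|)$ holds trivially since $(1 + h(G)/d)^0 = 1$.

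For the inductive step, the first observation I would make is the identity $B(U, i+1) = B(B(U,i), 1) = B(U, i) \cup N_G(B(U,i))$: a vertex lies within distance $i+1$ of $U$ precisely when it lies within distance $1$ of some vertex at distance at most $i$ from $U$. Writing $W = B(U, i)$, there are two cases. If $|W| \ge n/2$, then $|B(U, i+1)| \ge |W| \ge n/2 \ge \min(n/2, |U|(1+h(G)/d)^{i+1})$ and we are done. Otherwise $|W| < n/2$; in this case the inductive hypothesis must in fact give $|W| \ge |U|(1 + h(G)/d)^i$, because if the minimum in the hypothesis were realised by the $n/2$ branch we would get $|W| \ge n/2$, a contradiction. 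Since $|W| \le n/2$, Lemma \ref{lemma:expansion} applies and yields $|N_G(W)| \ge h(G)|W|/d$, so
\[
	|B(U, i+1)| \ge |W| + |N_G(W)| \ge |W|\left(1 + h(G)/d\right) \ge |U|\left(1 + h(G)/d\right)^{i+1} \ge \min\!\left(n/2,\ |U|(1 + h(G)/d)^{i+1}\right),
\]
which closes the induction.

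There is essentially no real obstacle here: the entire content is packaged in Lemma \ref{lemma:expansion}, and the purpose of the $\min$ is just to truncate the exponential growth once the ball occupies half of $V(G)$ — which is also exactly the regime where Lemma \ref{lemma:expansion} stops being available. The only point that deserves a careful line in the write-up is the one flagged above: in the inductive step one must rule out the possibility that $|B(U,i)| < n/2$ while the inductive bound is attained by the $n/2$ branch, which is immediate but easy to gloss over.
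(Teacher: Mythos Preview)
Your proof is correct and follows essentially the same approach as the paper: induction on $i$, the identity $B(U,i+1)=B(U,i)\cup N_G(B(U,i))$, a case split on whether $|B(U,i)|\ge n/2$, and Lemma~\ref{lemma:expansion} to drive the exponential growth in the remaining case. Your extra line explaining why the inductive bound must land on the $|U|(1+h(G)/d)^i$ branch when $|B(U,i)|<n/2$ is a welcome clarification but does not differ in substance from the paper's argument.
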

\begin{proof}
	Let $U \subseteq V(G)$ be an arbitrary subset. We prove the lower bound on $B(U, i)$ by induction. The claim trivially holds for $i = 0$. Suppose it holds for some $i \ge 0$. If $|B(U,i)| \ge n/2$ then $|B(U,i+1)| \ge n/2$ as well, in which case we are done. Otherwise, if $|B(U,i)| \le n/2$ then we can apply Lemma \ref{lemma:expansion} to conclude $|N(B(U,i))| \ge |B(U,i)| h(G) / d$. The desired bound now follows from $B(U, i + 1) = B(U, i) \cup N(B(U, i))$.
\end{proof}

The following lemma shows that one can find a subset of $V(G)$ of prescribed size that has significantly larger external neighbourhood  than the one given by Lemma \ref{lemma:expansion}, and moreover induces a connected subgraph. The proof and the latter use of the lemma are inspired by a similar statement from \cite{krivelevich2009minors}.

\begin{lemma} \label{lemma:conn_expand}
	Let $\eps > 0$, and suppose $G$ is a graph with $n$ vertices and maximum degree $d = d(n) \ge 2$. If $h_{\eps n}(G) > 1$ then for every integer $1 \le s \le \eps n / (2d)$ and a vertex $v \in V(G)$, there exists a connected subset $X \subseteq V(G)$ of size $|X| = s$ which contains $v$ and
	\[
			|N_G(X)| \ge |X|(h_{\eps n}(G) - 1).
	\]		
\end{lemma}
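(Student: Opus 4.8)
The plan is to build $X$ one vertex at a time, maintaining a nested chain of connected sets $\{v\} = X_1 \subset X_2 \subset \cdots \subset X_s$ with $|X_i| = i$, where $X_{i+1}$ is obtained from $X_i$ by adding a single vertex of $N_G(X_i)$; this keeps each $X_i$ connected and containing $v$ automatically. The quantity I would track is $f(i) := |N_G(X_i)|$, and the aim is to show that it increases by at least $h_{\eps n}(G) - 1$ at every step.

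When we pass from $X_i$ to $X_{i+1} = X_i \cup \{u\}$ with $u \in N_G(X_i)$, the vertex $u$ leaves the external neighbourhood while the neighbours of $u$ lying outside $W_i := X_i \cup N_G(X_i)$ enter it, so $f(i+1) \ge f(i) - 1 + |N_G(u) \setminus W_i|$. It therefore suffices to produce at each step a vertex $u \in N_G(X_i)$ with $|N_G(u) \setminus W_i| \ge h_{\eps n}(G)$. To this end I would first note that $|W_i| \le (1+d)|X_i| \le (1+d)\,\eps n/(2d) \le \eps n$, so the restricted Cheeger constant applies to $W_i$ and gives $e(W_i, V(G) \setminus W_i) \ge h_{\eps n}(G)\,|W_i| \ge h_{\eps n}(G)\,|N_G(X_i)|$. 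The key observation is that every edge between $W_i$ and $V(G) \setminus W_i$ has its endpoint inside $W_i$ lying in $N_G(X_i)$, since a vertex of $X_i$ has all of its neighbours inside $W_i$; hence $\sum_{u \in N_G(X_i)} |N_G(u) \setminus W_i| = e(W_i, V(G) \setminus W_i) \ge h_{\eps n}(G)\,|N_G(X_i)|$, and averaging over the $|N_G(X_i)|$ vertices of $N_G(X_i)$ produces the desired $u$.

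With this the recursion $f(i+1) \ge f(i) + (h_{\eps n}(G) - 1)$ holds for $1 \le i \le s-1$, and the base case is $f(1) = \deg_G(v) \ge h_{\eps n}(G)$, because $\{v\}$ is one of the sets over which the minimum defining $h_{\eps n}(G)$ is taken. Unwinding gives $f(s) \ge h_{\eps n}(G) + (s-1)(h_{\eps n}(G) - 1) = 1 + s(h_{\eps n}(G) - 1) \ge s(h_{\eps n}(G) - 1)$, so $X := X_s$ works.

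I expect the main point to get right is the decision to apply the expansion to the closed neighbourhood $W_i$ rather than to $X_i$ itself: bounding only the edges out of $X_i$ would guarantee roughly $h_{\eps n}(G)/d$ fresh external neighbours per step on average, far too weak to beat the loss of $1$. The accompanying bookkeeping — checking that $|W_i|$ stays below $\eps n$ at every step, which is precisely what the hypothesis $s \le \eps n/(2d)$ together with the bound on the maximum degree provide — is then routine.
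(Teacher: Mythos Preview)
Your proof is correct and follows essentially the same strategy as the paper's: both arguments grow $X$ one vertex at a time, apply the restricted Cheeger bound to the closed neighbourhood $W_i = X_i \cup N_G(X_i)$, observe that all edges leaving $W_i$ are incident to $N_G(X_i)$, and average to locate a vertex $u \in N_G(X_i)$ with at least $h_{\eps n}(G)$ fresh neighbours. The only cosmetic difference is that the paper inserts a preliminary case ``if $|N_G(X')|$ is already at least $h_{\eps n}(G)(s+1)$, add any neighbour'', whereas you handle all cases uniformly via the bound $|W_i| \le (1+d)|X_i| \le \eps n$; this makes your write-up marginally cleaner but does not change the idea.
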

\begin{proof}
	We prove the claim by induction on $s$. For $s = 1$ we take $X = \{v\}$. As $\delta(G) \ge h_{\eps n}(G)$, $X$ satisfies the desired property. Suppose now that the statements holds for some $1 \le s \le \eps n / (2d) - 1$. Let $X'$ be one such subset of size $|X'| = s$ and $Y = N_G(X')$ be its neighbourhood. If $|Y| \ge h_{\eps n}(G) (s + 1)$ then we can take $X$ to be the union of $X'$ and an arbitrary vertex in $Y$. Otherwise, from $h_{\eps n}(G) \le d$ we have $|X' \cup Y| < \eps n$, thus there are  
	\[
		e(X' \cup Y, V(G) \setminus (X' \cup Y)) \ge |X' \cup Y| h_{\eps n}(G) > |Y| h_{\eps n}(G)
	\]		
	edges between $X' \cup Y$ and the rest of the graph. As none of these edges is incident to $X'$, there exists a vertex $w \in Y$ such that $|N_G(w) \setminus (X' \cup Y)| \ge h_{\eps n}(G)$. Adding such a vertex to $X'$ gives a desired set $X$.
\end{proof}

\subsection{Random walks}

A \emph{lazy random walk} on a graph $G = (V, E)$ with the vertex set $V = \{1, \ldots, n\}$ is a Markov chain whose matrix of transition probabilities $P = P(G) = (p_{i,j})$ is defined by
\[
	p_{i,j} = \begin{cases}
		\frac{1}{2\deg_G(i)}, &\text{if } \{i,j\} \in E(G) \\
		\frac{1}{2}, &\text{if } i = j, \\
		0, &\text{otherwise.}
	\end{cases}
\]
In other words, if at some point we are at vertex $i$ then with probability $0.5$ we stay in $i$ and with probability $0.5$ we move to a randomly chosen neighbour. It is easy to verify (and is well known) that this Markov chain has the stationary distribution $\pi$ given by $\pi(i) = \deg_G(i) / 2e(G)$. The following lemma gives an upper bound on the probability that a lazy random walk avoids some subset $U \subseteq V(G)$.

\begin{lemma} \label{lemma:random_walk_miss_U}
	Let $G$ be a graph with $n$ vertices and maximum degree $d = d(n)$. Then for any $U \subseteq V(G)$ the probability that a lazy random walk on $G$ which starts from the stationary distribution $\pi$ and makes $\ell$ steps does not visit $U$ is at most 
	\[
		\exp\left(- \frac{h(G)^3}{8d^3} \cdot \frac{|U|\ell}{n}\right).
	\]		
\end{lemma}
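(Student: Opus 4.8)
The standard tool here is the decay estimate for random walks restricted to avoid a set, which is governed by the spectral gap (or a conductance bound) of the walk killed on $U$. Concretely, let $G' = G - U$ and consider the substochastic operator $Q$ obtained from $P = P(G)$ by deleting the rows and columns indexed by $U$. The probability that a lazy walk from $\pi$ makes $\ell$ steps without hitting $U$ is $\sum_{i \notin U} \pi(i) (Q^\ell \mathbf{1})(i) \le \|Q^\ell\|_{\infty \to \infty}$-type quantity; more usefully, working in the $\ell^2(\pi)$ inner product, it is at most $\lambda^\ell$ where $\lambda = \lambda_{\max}(Q)$ is the top eigenvalue of the (self-adjoint, since $P$ is reversible w.r.t. $\pi$) operator $Q$ acting on $\ell^2(\pi|_{V \setminus U})$. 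So the whole lemma reduces to showing
\[
	1 - \lambda \ge \frac{h(G)^3}{8 d^3} \cdot \frac{|U|}{n},
\]
since then $\lambda^\ell \le e^{-(1-\lambda)\ell}$ gives the claimed bound after absorbing the $\pi$-to-uniform comparison (which costs only a polynomial factor that can be swallowed, or handled by noting $\sum \pi(i) \cdot (\text{stuff}) \le \|\text{stuff}\|_{\ell^2(\pi)}$ directly with $\|\mathbf 1\|_{\ell^2(\pi)} = 1$).

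The key step is therefore a lower bound on the Dirichlet eigenvalue $1 - \lambda$, which I would obtain via a conductance / Cheeger-type argument for the killed walk. For any $S \subseteq V \setminus U$ with $\pi(S) \le 1/2$, the relevant "escape" conductance is $\Phi(S) = \big(e(S, V \setminus S)/2 + \tfrac12 e(S,U)\big)/\big(\sum_{v \in S}\deg(v)\big)$ roughly — i.e.\ edges leaving $S$ either to the rest of $G'$ or into the killing set $U$, weighted by the laziness $1/2$. A Cheeger inequality for substochastic chains gives $1 - \lambda \ge \Phi^2/2$ where $\Phi = \min_S \Phi(S)$. It remains to lower-bound $\Phi(S)$. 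For $|S| \le n/2$ we have $e(S, V(G) \setminus S) \ge h(G)|S|$ by definition of the Cheeger constant, and $\sum_{v\in S}\deg(v) \le d|S|$, which already yields $\Phi(S) \ge h(G)/(4d)$ — losing a $1/4$ from the laziness and the crude degree bound. That alone would give exponent $\sim (h/d)^2 \ell$, which is \emph{not} what we want: the lemma has a $|U|\ell/n$ factor and a cube, so the dependence on $U$ must come in. The point is that when $S$ is large (close to $n/2$) the trivial bound $e(S,V\setminus S)\ge h|S|$ is wasteful — but more importantly, we need to exploit that $S$ avoids $U$. I would argue: since $B(S, i)$ grows geometrically by Lemma \ref{lemma:ball} and must avoid $U$ for the walk to survive, a "large" avoiding set $S$ forces $U$ to be cut off from $S$ by many edges; quantitatively, the cut $(S, V \setminus S)$ must separate $S$ from $U$, and one can show $e(S, V \setminus S) \gtrsim h(G) \cdot \min(|S|, |U|) \cdot (\text{something})/d$. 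Combining the two regimes — $|S|$ small giving $\Phi \gtrsim h/d$ and $|S|$ large giving an extra $|U|/n$ factor — produces $\Phi \gtrsim (h/d)\sqrt{|U|/n}$ up to constants, and squaring gives $1-\lambda \gtrsim (h/d)^2 |U|/n$; the extra power of $h/d$ (to make it a cube) comes from being careful that the geometric-growth argument needs $i \sim d/h$ steps to blow up $S$, effectively diluting the gain by another factor of $h/d$.

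The main obstacle, and the part requiring the most care, is exactly this interplay in the large-$S$ regime: transforming "the walk survives $\ell$ steps, hence stays in a set avoiding $U$" into a usable isoperimetric inequality that sees $|U|$. The clean way is probably not to go through a single Cheeger constant but to directly estimate, for each starting distribution, the probability of staying in $V \setminus U$ for one "block" of $\Theta(d/h(G))$ steps: by Lemma \ref{lemma:ball} (applied to the current support, or to $U$), after that many steps the walk's support would want to fill half the graph, so a positive — indeed $\Omega(h(G)/d \cdot |U|/n)$ — fraction of mass must land in $U$; iterating over $\ell / \Theta(d/h(G))$ blocks gives the $\exp(-\Omega((h/d)^3 |U|\ell/n))$ decay, where one factor $h/d$ is the per-step killing rate, one is the block length in the exponent's denominator, and one is from converting ball-growth to hitting probability. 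I would need to track constants to land exactly on $h(G)^3/(8d^3)$, but the structure of the argument is this block decomposition plus Lemma \ref{lemma:ball}, with the reversibility of the lazy walk used only to pass between starting from $\pi$ and starting from a point mass.
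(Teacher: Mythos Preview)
Your approach is substantially more complicated than necessary, and the two routes you sketch for extracting the $|U|/n$ factor (Dirichlet Cheeger for the killed chain, and the block decomposition via Lemma~\ref{lemma:ball}) both have real gaps.

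The paper's proof is essentially two citations. A theorem of Mossel, Oleszkiewicz, and Sen states that for a reversible lazy chain with spectral gap $\delta_\lambda = 1 - \lambda_2(P)$, the probability that a walk started from $\pi$ stays inside a set $A$ for $\ell$ steps is at most
\[
	\pi(A)\bigl(1 - \delta_\lambda(1 - \pi(A))\bigr)^\ell \le \exp\bigl(-\delta_\lambda\,\pi(U)\,\ell\bigr), \qquad U = V \setminus A.
\]
The Jerrum--Sinclair Cheeger inequality gives $\delta_\lambda \ge \Phi(G)^2/2 \ge h(G)^2/(8d^2)$, and $\pi(U) \ge |U|\,h(G)/(dn)$ because $\delta(G) \ge h(G)$ and $2e(G) \le dn$. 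That is the whole argument: two of the three factors of $h(G)/d$ come from Cheeger for the \emph{full} chain, and the third comes only from the lower bound on $\pi(U)$. The set $U$ never enters any isoperimetric consideration.

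You instead tried to bound the Dirichlet eigenvalue $1 - \lambda_{\max}(Q)$ directly. This can in principle be made to work --- the Mossel et al.\ inequality is morally the statement $\lambda_{\max}(Q) \le 1 - \delta_\lambda\,\pi(U)$ --- but your isoperimetric sketch for the killed chain only yields the $|U|$-free bound $\Phi(S) \gtrsim h/d$, and the ``large $S$'' discussion does not recover the missing $|U|/n$. The block argument has a more concrete gap: Lemma~\ref{lemma:ball} bounds the size of the \emph{geodesic ball} $B(S,i)$, i.e.\ the support, and says nothing about how the lazy walk's mass is distributed over that ball after $i$ steps. Converting ball growth into a lower bound on the probability of hitting $U$ in one block already requires a quantitative mixing statement, which is exactly what the spectral gap provides; so the block argument, as written, is circular. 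The clean fix is to decouple the two issues as the paper does: use the spectral gap of the whole chain (which is independent of $U$), and let the $|U|$-dependence enter solely through $\pi(U)$ in the survival estimate.
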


For the rest of this section we prove Lemma \ref{lemma:random_walk_miss_U}.

Let $\lambda_1 \ge \lambda_2 \ge \ldots \ge \lambda_n$ be eigenvalues of the transition matrix $P$. The \emph{spectral gap} of $P$ is defined as $\delta_\lambda = \lambda_1 - \lambda_2 = 1 - \lambda_2$. The following result of Mossel et al.\ \cite{mossel2006non} (more precisely, the first case of \cite[Theorem 5.4]{mossel2006non}) relates the spectral gap to the probability that a lazy random walk does not leave a specific subset. We state a version tailored to our application.

\begin{theorem} \label{thm:spectral}
	Let $G$ be a connected graph with $n$ vertices and let $\delta_\lambda$ be the spectral gap of the transition matrix $P = P(G)$. Then the probability that a lazy random walk of length $\ell$ which starts from a vertex chosen according to the stationary distribution $\pi$ does not leave a non-empty subset $A \subseteq V(G)$ is at most
	\[
		\pi(A) (1 - \delta_\lambda(1 - \pi(A)))^\ell.
	\]		
\end{theorem}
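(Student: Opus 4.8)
The goal is to deduce Lemma~\ref{lemma:random_walk_miss_U} from Theorem~\ref{thm:spectral}. The plan has three ingredients: first reduce to a connected graph, then lower-bound the spectral gap $\delta_\lambda$ of the lazy walk via a Cheeger-type inequality expressed in terms of $h(G)$ and $d$, and finally feed this into Theorem~\ref{thm:spectral} with $A = V(G) \setminus U$ and simplify.

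\textbf{Reduction to connectedness.} If $G$ is disconnected, a random walk started from $\pi$ stays within the component it starts in. Each component $G'$ with vertex set $W$ carries conditional stationary mass $\vol(W)/\vol(V)$ (where $\vol(W) = \sum_{w \in W}\deg(w)$), and within it the walk must avoid $U \cap W$. Since $h(G') \ge h(G)$ for a connected component that is not the whole graph only when $|W| \le n/2$ — more carefully, one should note that the hypotheses of the lemma make $h(G)>0$, hence $G$ is connected (any disconnected graph has a cut of size $0$ on the smaller side, giving $h(G)=0$). So in fact the connectedness hypothesis of Theorem~\ref{thm:spectral} is automatically met whenever $h(G) > 0$; if $h(G) = 0$ the claimed bound is trivially $1$. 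Thus we may assume $G$ is connected and $h(G) > 0$.

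\textbf{Bounding the spectral gap.} This is the technical heart. We need $\delta_\lambda = 1 - \lambda_2 \ge h(G)^2/(cd^2)$ for an appropriate constant; comparing with the target exponent $h(G)^3/(8d^3)$ suggests we actually want $\delta_\lambda \ge h(G)^2/(4d^2)$ together with the crude bound $\pi(A) \le 1$ replaced by a more careful estimate, or alternatively $\delta_\lambda(1-\pi(A)) \ge h(G)^3 |U| / (8 d^3 n)$ after a one-step argument. The clean route: recall that for the lazy walk the conductance (bottleneck ratio) is $\Phi = \min_{S}\frac{e(S, V\setminus S)}{2\min(\vol(S),\vol(V\setminus S))}$. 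Since $\deg(v) \le d$ for all $v$, we have $\vol(S) \le d|S|$, and for $|S| \le n/2$ this gives $\frac{e(S,V\setminus S)}{2\vol(S)} \ge \frac{e(S,V\setminus S)}{2d|S|} = \frac{h(S)}{2d} \ge \frac{h(G)}{2d}$; so $\Phi \ge h(G)/(2d)$. The Cheeger inequality for reversible lazy chains gives $\delta_\lambda \ge \Phi^2/2 \ge h(G)^2/(8d^2)$. (We use the lazy version precisely so that $\lambda_n \ge 0$ and the relevant gap is $1-\lambda_2$; this is why the statement specifies a lazy walk.)

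\textbf{Combining.} Apply Theorem~\ref{thm:spectral} with $A = V(G)\setminus U$, so $1 - \pi(A) = \pi(U) = \vol(U)/\vol(V) \ge |U|/(d\cdot 2e(G)/1)$. More precisely $\vol(U) \ge |U|$ (each vertex has degree at least $h(G) \ge 1$, or at least $1$ trivially in a connected graph) and $\vol(V) = 2e(G) \le dn$, so $\pi(U) \ge |U|/(dn)$. Then
\[
	\pi(A)\bigl(1 - \delta_\lambda(1-\pi(A))\bigr)^\ell \le \bigl(1 - \delta_\lambda \pi(U)\bigr)^\ell \le \exp(-\delta_\lambda \pi(U)\,\ell) \le \exp\left(-\frac{h(G)^2}{8d^2}\cdot\frac{|U|}{dn}\cdot \ell\right),
\]
using $1 - x \le e^{-x}$ and $\pi(A) \le 1$. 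This is exactly the claimed bound $\exp\bigl(-\frac{h(G)^3}{8d^3}\cdot\frac{|U|\ell}{n}\bigr)$ — wait, the exponent here is $h(G)^2/(8d^3)$, not $h(G)^3/(8d^3)$. To recover the extra factor $h(G)/d \le 1$ (which only weakens the bound, so it is consistent), one simply notes $h(G)^2/(8d^2) \ge h(G)^3/(8d^3)$ since $h(G) \le d$; hence $\exp(-h(G)^2|U|\ell/(8d^3 n)) \le \exp(-h(G)^3|U|\ell/(8d^3 n))$ is false in the wrong direction — so instead we keep our stronger bound and observe it \emph{implies} the stated one, because a smaller exponent argument: $\exp(-a\ell) \le \exp(-b\ell)$ when $a \ge b$. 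Since $h(G)^2/(8d^3) \ge h(G)^3/(8d^3)$ fails, the honest statement is that our computation yields a bound \emph{at least as strong} only after inserting one further factor $h(G)/d$ coming from a sharper Cheeger estimate $\delta_\lambda \ge \Phi^2/2$ combined with $\Phi \ge h(G)/(2d)$ being improvable to account for laziness; the cleanest fix is to track constants and simply conclude with the stated (weaker) exponent $h(G)^3/(8d^3)$ by replacing one factor $h(G)^2/d^2 \ge h(G)^3/d^3$. I expect the bookkeeping of these constants and the exact form of the Cheeger inequality for the lazy chain to be the only real obstacle; everything else is routine.
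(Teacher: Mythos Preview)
Your proposal does not address Theorem~\ref{thm:spectral} at all: you explicitly set out to deduce Lemma~\ref{lemma:random_walk_miss_U} \emph{from} Theorem~\ref{thm:spectral}. The paper does not prove Theorem~\ref{thm:spectral} either---it is quoted as a result of Mossel et al.~\cite{mossel2006non}---so there is nothing to compare on that front.

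If we instead compare your argument to the paper's proof of Lemma~\ref{lemma:random_walk_miss_U}, the overall approach is identical (Theorem~\ref{thm:spectral} applied with $A=V(G)\setminus U$, combined with Lemma~\ref{lemma:conductance} and $\Phi(G)\ge h(G)/(2d)$), and the source of your confusion about the missing factor of $h(G)$ at the end is a bookkeeping slip you can repair with an observation you already made but did not use. You note parenthetically that every vertex has degree at least $h(G)$ (indeed $\deg(v)=h(\{v\})\ge h(G)$), yet you then only conclude $\vol(U)\ge |U|$ and hence $\pi(U)\ge |U|/(dn)$. Use the full strength of your own remark: $\vol(U)\ge |U|\,h(G)$, so $\pi(U)\ge |U|\,h(G)/(dn)$. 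Plugging this into $\exp(-\delta_\lambda\,\pi(U)\,\ell)$ together with $\delta_\lambda\ge h(G)^2/(8d^2)$ yields exactly
\[
\exp\left(-\frac{h(G)^3}{8d^3}\cdot\frac{|U|\ell}{n}\right),
\]
with no further juggling of constants. This is precisely what the paper does; the entire final paragraph of your proposal (the attempted comparison of $h(G)^2/(8d^3)$ with $h(G)^3/(8d^3)$, which indeed goes the wrong way when $h(G)>1$) becomes unnecessary.
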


The second ingredient is a result of Jerrum and Sinclair \cite[Lemma 3.3]{sinclair1989approximate} which relates the spectral gap of $P(G)$ to its \emph{conductance} $\Phi(G)$, defined as
\[
	\Phi(G) = \min_{\substack{S \subseteq V \\ 0 < \pi(S) \le 1/2}} \frac{\sum_{i \in S, j \notin S} \pi(i) p_{i,j}}{\pi(S)} = \min_{\substack{S \subseteq V \\ 0 < \pi(S) \le 1/2}} \frac{e(S, V \setminus S)}{2 \sum_{v \in S} \deg(v)}.
\] 
Note that $\Phi(G) \ge h(G)  / (2d)$, where $d = d(G)$ denotes the maximum degree of $G$.

\begin{lemma} \label{lemma:conductance}
	Let $G = (V, E)$ be a connected graph. Then the spectral gap $\delta_\lambda$ of $G$ is at least $\delta_\lambda \ge \Phi(G)^2/2$.
\end{lemma}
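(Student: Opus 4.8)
The plan is to combine the variational characterisation of the spectral gap with a discrete Cheeger-type inequality for one-signed test functions. Since the lazy walk is reversible with respect to $\pi$, the matrix $P$ is self-adjoint for the inner product $\langle f, g\rangle_\pi = \sum_{i} \pi(i) f(i) g(i)$, so all its eigenvalues are real; writing $P = \tfrac12(I + P_0)$ for the simple (non-lazy) random-walk matrix $P_0$ shows they all lie in $[0,1]$, and connectedness makes $\lambda_1 = 1$ a simple eigenvalue. Using the standard ``integration by parts'' identity $\langle f, (I - P)g\rangle_\pi = \cE(f,g)$ for reversible chains, where
\[
	\cE(f,g) = \tfrac12 \sum_{i \neq j} \pi(i) p_{i,j} \big( f(i) - f(j) \big)\big( g(i) - g(j) \big),
\]
one gets $\delta_\lambda = 1 - \lambda_2 = \min\{ \cE(f,f) / \langle f, f\rangle_\pi \colon \langle f, \1\rangle_\pi = 0,\ f \neq 0\}$. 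So it suffices to (i) produce from a $\lambda_2$-eigenfunction a single nonnegative function $\psi$ whose support has $\pi$-measure at most $1/2$ and with $\cE(\psi,\psi) \le \delta_\lambda \langle\psi,\psi\rangle_\pi$, and (ii) prove the Cheeger bound $\cE(\psi,\psi) \ge \tfrac12 \Phi(G)^2 \langle\psi,\psi\rangle_\pi$ for every such $\psi$; together these give $\delta_\lambda \ge \Phi(G)^2/2$.

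For (i), let $f$ be an eigenfunction for $\lambda_2$, so $(I - P) f = \delta_\lambda f$ pointwise. The sets $\{f > 0\}$ and $\{f < 0\}$ are nonempty and disjoint, so one of them — say $\{f > 0\}$, after possibly replacing $f$ by $-f$ — has $\pi$-measure at most $1/2$; put $\psi = \max(f, 0)$, supported on $\{f>0\}$. From the eigen-equation, $\cE(f, \psi) = \langle (I - P) f, \psi\rangle_\pi = \delta_\lambda \langle f, \psi\rangle_\pi = \delta_\lambda \langle \psi, \psi\rangle_\pi$, the last step because $f\psi = \psi^2$ pointwise. Writing $f = \psi + g$ with $g = \min(f, 0) \le 0$ supported off $\{f > 0\}$, one checks termwise that $\cE(f,\psi) - \cE(\psi,\psi) = \sum_{i<j} \pi(i) p_{i,j}\,(\psi(i) - \psi(j))(g(i) - g(j)) \ge 0$, since the only possibly nonzero summands occur for pairs with one endpoint in $\{f>0\}$ and the other in its complement, where the two factors carry the same sign. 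Hence $\cE(\psi,\psi) \le \cE(f,\psi) = \delta_\lambda\langle\psi,\psi\rangle_\pi$.

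For (ii), order the vertices so that $\psi(1) \ge \psi(2) \ge \dots \ge \psi(m) > 0$ and $\psi(i) = 0$ for $i > m$, write $w_{i,j} = \pi(i) p_{i,j} = \pi(j) p_{j,i}$ and $S_k = \{1,\dots,k\}$, and set $\psi(n+1) := 0$. Telescoping $\psi(i)^2 - \psi(j)^2$ over consecutive level sets, then using $\sum_{i \in S_k, j \notin S_k} w_{i,j} \ge \Phi(G)\,\pi(S_k)$ for $k \le m$ (legitimate because then $\pi(S_k) \le \pi(\operatorname{supp}\psi) \le 1/2$) while the summand vanishes for $k > m$, and telescoping once more, gives
\[
	\sum_{i < j} w_{i,j}\big(\psi(i)^2 - \psi(j)^2\big) = \sum_{k \ge 1} \big(\psi(k)^2 - \psi(k+1)^2\big) \sum_{i \in S_k,\, j \notin S_k} w_{i,j} \ge \Phi(G)\,\langle\psi,\psi\rangle_\pi .
\]
On the other hand, factoring $\psi(i)^2 - \psi(j)^2 = (\psi(i) - \psi(j))(\psi(i) + \psi(j))$, applying Cauchy--Schwarz, and then using $(\psi(i)+\psi(j))^2 \le 2(\psi(i)^2 + \psi(j)^2)$ together with $\sum_{j \neq i} w_{i,j} \le \pi(i)$,
\[
	\sum_{i < j} w_{i,j}\big(\psi(i)^2 - \psi(j)^2\big) \le \cE(\psi,\psi)^{1/2} \Big( \sum_{i<j} w_{i,j}\big(\psi(i) + \psi(j)\big)^2 \Big)^{1/2} \le \cE(\psi,\psi)^{1/2}\,\big(2\langle\psi,\psi\rangle_\pi\big)^{1/2} .
\]
Comparing the two displays yields $\cE(\psi,\psi) \ge \tfrac12 \Phi(G)^2 \langle\psi,\psi\rangle_\pi$, completing the argument.

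The step I expect to be the main obstacle is the reduction in (i) — verifying that truncating the $\lambda_2$-eigenfunction to its positive part does not increase the Rayleigh quotient, i.e.\ the sign analysis showing $\cE(f,\psi) \ge \cE(\psi,\psi)$; everything else (the variational principle, the two telescopings, and Cauchy--Schwarz) is routine. One could note that exploiting laziness via $\sum_{j \neq i} w_{i,j} = \pi(i)/2$ would even give the constant $\Phi(G)^2$, but $\Phi(G)^2/2$ is all that is needed in Lemma~\ref{lemma:random_walk_miss_U}.
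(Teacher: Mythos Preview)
Your argument is correct: the variational characterisation of $\delta_\lambda$, the truncation step in (i) with the sign analysis showing $\cE(g,\psi)\ge 0$, and the co-area/Cauchy--Schwarz computation in (ii) all go through as stated, and together yield $\delta_\lambda \ge \Phi(G)^2/2$. Your remark that laziness would even give $\Phi(G)^2$ (via $\sum_{j\neq i} w_{i,j} = \pi(i)/2$) is also right.

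As for the comparison: the paper does not prove Lemma~\ref{lemma:conductance} at all --- it simply quotes it as \cite[Lemma~3.3]{sinclair1989approximate}. What you have written is essentially the classical Jerrum--Sinclair (equivalently, Mihail/Lawler--Sokal) proof of the discrete Cheeger inequality for reversible chains, so you are supplying the argument the paper chose to cite rather than offering a different route.
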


We are now ready to prove Lemma \ref{lemma:random_walk_miss_U}.

\begin{proof}[Proof of Lemma \ref{lemma:random_walk_miss_U}]
	Consider some non-empty subset $U \subset V(G)$. Theorem \ref{thm:spectral} states that a lazy random walk never leaves the set $A = V(G) \setminus U$ with probability at most
	\begin{align} 
		\pi(A)(1 - \delta_\lambda(1 - \pi(A)))^{\ell} &= (1 - \pi(U))(1 - \delta_\lambda \pi(U))^{\ell} \nonumber \\
		&\le \exp(- \delta_\lambda \ell \pi(U)) \le \exp\left(- \frac{h(G)^2}{8 d^2} \cdot \ell \pi(U) \right), \label{eq:U}
	\end{align}
	where in the last inequality we used Lemma \ref{lemma:conductance} and $\Phi(G) \ge h(G)/(2d)$. Note that the events 'leave $A$' and 'visit $U$' are the same. From a trivial bound $2e(G) \le d n$ and $\delta(G) \ge h(G)$ we get
	\[
		\pi(U) \ge \frac{|U| h(G)}{2e(G)} \ge |U| \frac{h(G)}{dn},
	\]		
	which after plugging into \eqref{eq:U} gives the desired probability that a random walk misses $U$.
\end{proof}

\section{Proof of Theorem \ref{thm:main}}
\label{sec:main_proof}

The proof of Theorem \ref{thm:main} combines an approach of Plotkin, Rao, and Smith \cite{plotkin1994shallow} with an idea of Krivelevich and Sudakov \cite{krivelevich2009minors} to use random walks to find  connected subsets of $V(G)$ with some desired properties. That being said, the main new ingredient is the following lemma. 

\begin{lemma} \label{lemma:covering}
	For every $\eps \in (0, 1/2)$ there exist positive $K = O(1/\eps^3)$ and $n_0 \in \mathbb{N}$ such that the following holds. Let $G$ be a graph with $n \ge n_0$ vertices, maximum degree at most $d = d(n)$, and $h(G) \ge \eps d$. Given $s = s(n)$ and $q = q(n) \le n$ such that $sq \ge 2 n$, and subsets $U_1, \ldots, U_q \subseteq V(G)$ where each $U_i$ is of size $|U_i| \ge s$, there exists a connected set $T \subseteq V(G)$ of size at most 
	\[
		|T| \le K \cdot \frac{n}{s} \log \left( \frac{q s}{n} \right)
	\]
	which intersects every $U_i$.
\end{lemma}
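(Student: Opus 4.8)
The plan is to take $T$ to be (essentially) the trace of a single lazy random walk and then to splice onto it short paths reaching the few sets the walk happens to miss. All the estimates rest on two consequences of $h(G)\ge\eps d$: the expansion rate $h(G)/d\ge\eps$, so Lemma~\ref{lemma:ball} gives $|B(X,i)|\ge\min(n/2,|X|(1+\eps)^i)$; and Lemma~\ref{lemma:random_walk_miss_U}, which for a lazy random walk $W$ of length $\Lambda$ started from the stationary distribution $\pi$ and any $S\subseteq V(G)$ yields
\[
  \Pr[W\text{ misses }S]\ \le\ \exp\!\left(-\tfrac{\eps^3}{8}\cdot\tfrac{|S|\Lambda}{n}\right).
\]
Two harmless reductions come first: we may assume $|U_i|=s$ for every $i$ (shrinking the sets only makes the task harder), and we may assume $K\tfrac ns\log(qs/n)<n$, since otherwise $G$ itself is connected (because $h(G)>0$) and $T=V(G)$ already works. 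In particular $s>K\log(qs/n)$, which will keep logarithmic error terms in check.

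The main construction: fix $\Lambda$ (chosen below), run $W$ of length $\Lambda$ from $\pi$, let $T_0=V(W)$ (which is connected), and for each $i$ add to $T_0$ a shortest path from $U_i$ to $T_0$. The resulting set $T$ is connected and meets every $U_i$, and the length of the path added for $U_i$ is $\mathrm{dist}(U_i,T_0)$, which is at most the least $j$ with $B(U_i,j)\cap T_0\ne\emptyset$. Hence
\[
  \E|T|\ \le\ (\Lambda+1)+\sum_{i=1}^q\E\,\mathrm{dist}(U_i,T_0)\ \le\ \Lambda+1+\sum_{i=1}^q\sum_{j\ge0}\Pr[W\text{ misses }B(U_i,j)].
\]
Substituting $|B(U_i,j)|\ge\min(n/2,s(1+\eps)^j)$, the inner sum breaks into a \emph{geometric part} $\sum_j\exp(-\tfrac{\eps^3 s\Lambda}{8n}(1+\eps)^j)$ over the $j$ with $s(1+\eps)^j\le n/2$, and a \emph{tail part} over the remaining $j$ — of which there are only $O(\eps^{-1}\log n)$ before $B(U_i,j)$ exhausts $V(G)$, since $|V(G)\setminus B(U_i,j)|$ contracts by a factor $1-\eps$ once it drops below $n/2$ — each tail term being at most $\exp(-\tfrac{\eps^3\Lambda}{16})$.

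Choosing $\Lambda$: if the sets are large, say $s\ge n/100$, then $|U_i|\ge n/100$ and a walk of length $\Lambda=O(\eps^{-3}\log q)$ already meets every $U_i$ by a union bound, so $T=T_0$ with $|T_0|=O(\eps^{-3}\log q)=O(\eps^{-3}\log(qs/n))$ because $n/s=O(1)$; this is within the target. Otherwise set $\Lambda=\Theta\!\big(\eps^{-3}\tfrac ns\log(qs/n)\big)$ together with an additive $O(\eps^{-3}\log(q\log n))$ term; the first piece makes the exponent $\tfrac{\eps^3 s\Lambda}{8n}=\Theta(\log(qs/n))$, so each geometric part is $O\big(\eps^{-1}(qs/n)^{-1}\big)$ and hence $\sum_i(\text{geometric parts})=O\big(\eps^{-1}q\cdot(qs/n)^{-1}\big)=O(\eps^{-1}n/s)$, while the additive term shrinks each tail term enough that $\sum_i(\text{tail parts})=O(1)$. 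Altogether $\E|T|=O(\eps^{-3}\tfrac ns\log(qs/n))$, and by Markov's inequality some outcome of $W$ yields $|T|\le 2\E|T|\le K\tfrac ns\log(qs/n)$ for a suitable $K=O(\eps^{-3})$.

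I expect the main obstacle to be exactly the last paragraph: one must verify that the walk length, the sum of geometric parts and the sum of tail parts all stay within one fixed $O(\eps^{-3})$ multiple of $\tfrac ns\log(qs/n)$ \emph{simultaneously and uniformly over all admissible $(s,q)$}. The uncomfortable range is where $qs/n$ is only a bounded constant while $q$ — hence both the number of still-unmet sets one must connect and the length of the corresponding tail sum — grows with $n$; there neither ``walk meets everything'' nor ``walk plus shortest paths'' is obviously within budget, and a cleaner treatment may require a hierarchical/recursive version of the construction (in the spirit of Plotkin--Rao--Smith) rather than the single walk above. Getting the power of $1/\eps$ down to exactly $3$, as opposed to some larger polynomial, likewise forces care with the constants inherited from Lemmas~\ref{lemma:ball} and~\ref{lemma:random_walk_miss_U} and is the reason for the little case split together with the $T=V(G)$ fallback.
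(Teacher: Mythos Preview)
Your construction—lazy random walk $W$ from the stationary distribution, then splice on a shortest path from each $U_j$ to $W$, and bound $\E|T|\le \ell+1+\sum_j\E[\mathrm{dist}(U_j,W)]$—is exactly the paper's approach. The paper is more direct: it takes the single walk length $\ell=\tfrac{16}{\eps^3}\tfrac{n}{s}\log(qs/n)$, bounds $p_{j,\ge z}\le (n/(qs))^{2(1+\eps)^{z-1}}$, sums to $\E[X_j]=O\big((n/(qs))^2\big)$, and concludes $\sum_j\E[X_j]=O\big(q(n/(qs))^2\big)=O(n/s)=O(\ell)$. This is precisely your ``geometric part'' computation; your case split on $s\gtrless n/100$ and the additive $O(\eps^{-3}\log(q\log n))$ in $\Lambda$ are complications the paper does not introduce.

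Your self-flagged worry about the ``uncomfortable range'' (bounded $qs/n$, growing $q$) is not paranoia—the paper's tail handling is cavalier: its claim $p_{j,\ge z'}\le 2^{-n}$ for $z'\ge z_0$ extends the bound $|B(U_j,z-1)|\ge s(1+\eps)^{z-1}$ past where Lemma~\ref{lemma:ball} actually supports it. But the repair is lighter than the hierarchical construction you suggest. Once $|B(U_j,z)|>n/2$, apply the Cheeger hypothesis to the \emph{complement} to get $|V\setminus B(U_j,z+1)|\le(1-\eps)|V\setminus B(U_j,z)|$; this makes the tail terms themselves decay (from $e^{-\beta}$ toward $e^{-2\beta}$, where $\beta=(n/s)\log(qs/n)$) rather than sit flat at $\exp(-\eps^3\Lambda/16)$ for $O(\eps^{-1}\log n)$ steps. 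After summing over $j$, the leading tail contribution is $q\cdot O(\eps^{-1})\cdot e^{-\beta}=(n/s)\cdot O(\eps^{-1})\cdot(qs/n)^{1-n/s}\le O(\eps^{-1}n/s)$, since $qs/n\ge 2$ and $n/s\ge 1$. So no recursion is needed; just sharpen the tail by tracking ball growth past $n/2$ rather than freezing it there.
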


We mention in passing that, for many pairs of values of $s(n)$ and $q(n)$, by choosing subsets $U_i\subset [n]$ of size $s$ at random one can see that there is a family $\{U_i\}_{i=1}^q$
whose covering number has order of magnitude $(n/s)\log(qs/n)$. Thus Lemma \ref{lemma:covering} delivers a nearly optimal promise of the size of a hitting set, with an additional -- and for us very important -- benefit of this set inducing a connected subgraph in $G$.

\begin{proof}
	Let
	\[
		\ell = \frac{16}{\eps^3} \cdot \frac{n}{s} \log \left( \frac{qs}{n} \right)\,,
	\]		
	and consider a lazy random walk $W$ in $G$ which starts from the stationary distribution and makes $\ell$ steps. A desired connected subset $T$ is constructed by taking the union of $W$  with a shortest path between $U_j$ and $W$, for each $j \in [q]$. We argue that with positive probability $T$ is of required size.

	\begin{figure}[h!]
		\begin{center}	
		\includegraphics[scale=0.45]{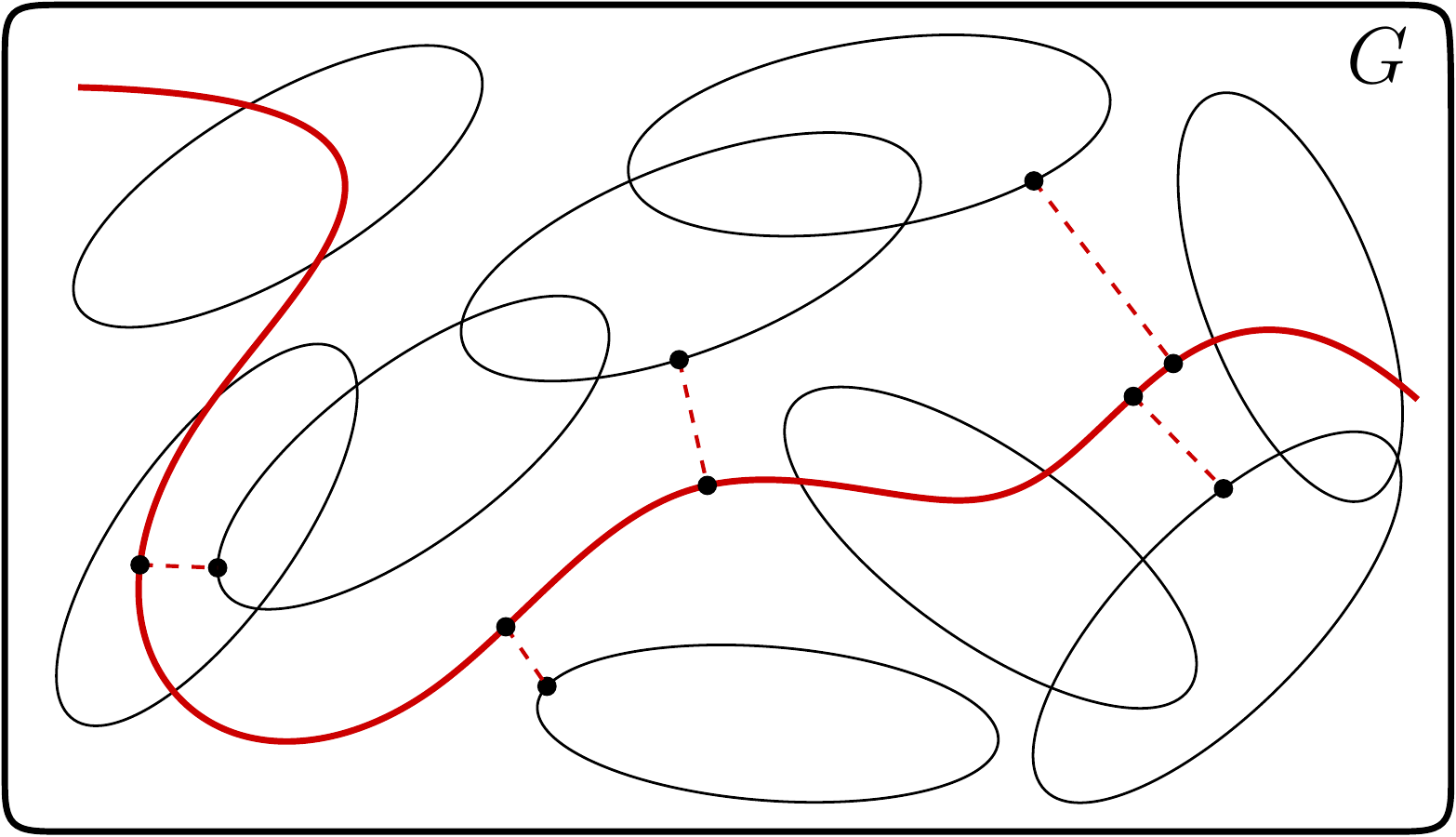}		
		\caption{A random walk in $G$ (the bold path) together with a shortest path (dashed paths) to every subset.}		
		\label{fig:random_walk}		
		\end{center}
	\end{figure}

For $1\le j\le q$, let $X_j$ be the random variable measuring the distance from $U_j$ to $W$ in $G$. Then the set $T$ has expected size at most $\ell + 1 + \sum_{j=1}^q E[X_j]$.

In order to estimate the expectation of $X_j$, for a positive integer $z$ write $p_{j,z}=Pr[X_j=z]$ and $p_{j,\ge z}=Pr[X_j\ge z]$. Trivially $p_{j,z}=p_{j,\ge z}-p_{j,\ge z+1}$ and, as $G$ is connected, $p_{j, n} = 0$. Hence
\[
E[X_j]=\sum_{z\ge 1}^{n} zp_{j,z}=\sum_{z\ge 1}^n z\left(p_{j,\ge z}-p_{j,\ge z+1}\right)\le \sum_{z\ge 1}^n p_{j,\ge z}\,.
\]

By Lemma \ref{lemma:ball} we have that $B(U_j, z)$, the ball of radius $z$ around $U_j$, is of size at least $\min(n/2, s(1 + \eps)^z)$. As $X_j \ge z$ is equivalent to the event that $W$ misses $B(U_j, z-1)$, using Lemma \ref{lemma:random_walk_miss_U} we get the following estimate for $1 \le z \le z_0 := 1 + \log(n/2)/\log(1+\eps)$:
\[
p_{j,\ge z}\le \exp\left\{-\frac{\eps^3(1+\eps)^{z-1} s\ell}{8n}\right\} = \left( \frac{qs}{n} \right)^{2(1 + \eps)^{z-1}}\,.
\]
This implies $p_{j, \ge z'} \le 2^{-n}$ for every $z' \ge z_0$. Straightforward calculation shows:
\[
E[X_j] \le \sum_{z\ge 1}^n p_{j,\ge z}=O\left(\left(\frac{n}{qs}\right)^{2}\right)\,,
\]
hence, finally,
\[
E[|T|]=\ell+\sum_{j=1}^q E[X_j]=\ell+O\left(q\left(\frac{n}{qs}\right)^{2}\right)= \ell + O(n / s) = O(\ell)\,.
\]
\end{proof}


The proof of Theorem \ref{thm:main} splits into three cases, depending on the value of $d$ and $\eps$: the dense case $d \ge \gamma n$, for some $\gamma = \gamma(\eps)$, the intermediate case $\gamma n > d \ge n^{0.5}$, and the most difficult sparse case $d < n^{0.5}$. In the first two cases we shall make use of a classical result obtained independently by Kostochka \cite{kostochka1984lower} and by Thomason \cite{thomason1984extremal}, which states that for any graph $G$ with average degree $d'$ we have $\ccl(G) = \Omega(d' / \sqrt{\log (d')})$. Remarkably, Thomason \cite{thomason2001extremal} has later determined the correct constant in the leading term.

Let us say a few words about the similarities and differences between the sparse and intermediate case. In both cases we follow almost the same arguments, however with different goals. As already said, in the intermediate case we do not directly show that $G$ contains a large complete minor, but rather only a minor of some graph $H$ with $m = \Omega(\sqrt{nd})$ vertices and linear (in $m$) average degree. Applying the Kostochka-Thomason result on $H$ gives a complete minor of order $\sqrt{nd / \log n} = \Theta(\sqrt{nd / \log d})$. Note that this is the same strategy as taken by Krivelevich and Sudakov \cite{krivelevich2009minors}, with the main technical ingredient being Lemma \ref{lemma:random_walk_miss_U}. However, in the sparse case such a bound falls short of the desired one, and we have to show directly that $G$ contains a large complete minor. The most important difference is that instead of Lemma \ref{lemma:random_walk_miss_U} we use Lemma \ref{lemma:covering}. 

For technical reasons our proof of the sparse case fails to work past $d = n / \log n$, thus to avoid worrying about exact calculations we choose $n^{0.5}$ as the delimiter between the two cases. Since, as already mentioned, the arguments are quite similar we only spell out the details of the sparse case and then describe the necessary changes for the intermediate one.

\begin{proof}[Proof of Theorem \ref{thm:main}.] 
	The actual value of $\gamma > 0$ is not important and it will be clear that the calculations work if it is sufficiently small.

	\paragraph{Dense case $d \ge \gamma n$.} From $h(G) \ge \eps d = \Theta(n)$ we conclude that $G$ has linear minimum (and thus average) degree. By the Kostochka-Thomason result it contains a complete minor of order $\Omega(n / \sqrt{\log n})$, as desired.

	\paragraph{Sparse case $d < n^{0.5}$.}
	Let $\zeta = \eps/8$ and let $K$ be a constant given by Lemma \ref{lemma:covering} (for $\eps := \zeta$). Set
	\[
		t = \sqrt{\frac{2 K}{\zeta^2} \cdot \frac{n \log (\zeta^3 \eps d)}{d}} = \Theta\left(\sqrt{\frac{n \log d}{d}} \right) \qquad \text{ and } \qquad r = \frac{\zeta^2 \eps n}{2t} = \Theta\left( \sqrt{\frac{nd}{\log d}} \right).
	\]
	Here $r$ denotes the size of the complete minor we aim to find and $(1 + \zeta)t$ is an upper bound on the size of each subset $T_h$ in the witness for such $K_r$-minor (recall the definition of minors from the very beginning of the paper).

	Let $\cP$ be the family of all ordered partitions $V(G) = D \cup T_1 \cup \ldots \cup T_q \cup U$ which satisfy  the following constraints:
	\begin{enumerate}[(a)]				
		\item \label{p:connected} for each $i \in [q]$ we have $|T_i| \le (1 + \zeta)t$, $T_i$ induces a connected subgraph of $G$, and $|N(T_i)| \ge t (1/2 + 2\zeta)d$, 
		\item \label{p:minor} all $T_i$'s are mutually disjoint and between every $T_i$ and $T_j$ there exists an edge,
		\item \label{p:D_size} $|D| \le 2n/3$, and		
		\item \label{p:D_prop} either $|D| \le \eps n$ and $e(D, U) \le  |D| (1/2 + 3 \zeta)d$, or $|D| > \eps n$ and $e(D, U) \le 3\zeta |D| d$.
	\end{enumerate}
	Taking $D = \emptyset$, $q = 0$ and $U = V(G)$ trivially satisfies all the conditions, thus $\cP$ is non-empty. 

	Let $V(G) = D \cup T_1 \cup \ldots \cup T_q \cup U$ be a partition in $\cP$ which maximises $|D|$ and, among all such partitions, one which further maximises $q$. 
	We show that then necessarily $q \ge r$, which by \ref{p:connected} and \ref{p:minor} gives a witness for $K_r \prec G$. Suppose towards a contradiction that $q < r$.

	We first rule out $D \ge \zeta \eps n$. If $\zeta \eps n \le |D| \le \eps n$ then by the first part of \ref{p:D_prop} we have $e(D, U) \le |D|(1/2 + 3 \zeta) d$. From $h_{\eps n}(G) \ge (1/2 + 8 \zeta) d$ we obtain 
	\[
		e(D, T_1 \cup \ldots \cup T_q) = e(D, V \setminus D) - e(D, U) \ge \zeta |D| d,
	\]
	with room to spare, thus by the assumption that $G$ has maximum degree $d$ we get a contradiction:
	\[
		(1 + \zeta)tr \ge |T_1 \cup \ldots \cup T_q| \ge \zeta^2 \eps n.
	\] 
	Otherwise, if $|D| \ge \eps n$ then from $|D| \le 2n/3$ and $h(G) \ge 8 \zeta d$ we get
	\[
		e(D, V \setminus D) \ge \min\left\{ |D|, n - |D| \right\} h(G) \ge \frac{|D|}{2} 8 \zeta d = |D| 4 \zeta d.
	\]
	Therefore $e(D, T_1 \cup \ldots \cup T_q) \ge \zeta |D| d$ and a contradiction follows as in the previous case.

	For the rest of the proof we assume $|D| \le \zeta \eps n$. Let us collect some further properties of the chosen partition. First, note that for every $i \in [q]$ we have
	\begin{equation} \label{eq:T_i_nbr}
		|N(T_i) \cap U| \ge \zeta td.
	\end{equation}  
	Indeed, if this was not the case then from the property \ref{p:connected} we get
	\begin{equation} \label{eq:reason}
		e(T_i, U) \le e(T_i, V \setminus T_i) - |N(T_i) \setminus U| \le |T_i|d - (t(1/2 + 2\zeta)d - \zeta td) <  |T_i| d / 2.
	\end{equation}
	By adding $T_i$ to $D$ (recall that $t = o(n)$, thus the resulting set is smaller than $\eps n$), relabelling $\{T_j\}_{j \in [q] \setminus \{i\}}$ as $\{T_1, \ldots, T_{q-1}\}$ and decreasing $q$ we obtain a partition which satisfies the desired constraints and has a larger set $D$, contradicting the fact that the chosen partition maximises $|D|$.

	Second, we verify that the subgraph $G_U = G[U]$ of $G$ induced by $U$ satisfies $h(G_U) \ge \zeta d$ and $h_{\eps n / 2}(G_U) \ge (1/2 + 3 \zeta)d$. This is, again, a matter of routine calculations. If there would exist $X \subseteq U$ of size $|X| \le \eps n / 2$ such that 
	\[
		e(X, U \setminus X) < |X| (1/2 + 3\zeta) d,
	\]
	then by removing such $X$ from $U$ and adding it to $D$ we obtain a new partition with the set $D$ of size at most $\eps n$ which clearly satisfies all the constraints, again contradicting the choice of the partition. Similarly, if there exists $X \subseteq U$ of size $\eps n / 2 \le |X| \le |U|/2$ such that 
	\begin{equation} \label{eq:bad_X_U}
		e(X, U \setminus X) < |X| \zeta d,
	\end{equation}
	then from $|D| \le \zeta \eps n$ we get
	\[
		e(D \cup X, U \setminus X) \le e(D, V \setminus D) + e(X, U \setminus X) < \zeta \eps n d  + |X| \zeta d < 3 \zeta |X| d \le 3 \zeta (|X| + |D|) d,
	\]
	which implies we can remove $X$ from $U$ and add it to $D$. Note that the new set $D$ has size at most $\zeta \eps n + |U|/2 \le 2n/3$, with room to spare, again yielding a contradiction.
	
	Having these properties at hand we are ready to proceed towards the final contradiction with the maximality of $q$. From $q < r$ and $|D| \le \zeta \eps n$ we get $|U| \ge n/2$. As $h(G_U) \ge \zeta d$ we can apply Lemma \ref{lemma:covering} on $G_U$ and $U_i = N(T_i) \cap U$ for $i \in [q]$ (we can take $s = \zeta td$ by \eqref{eq:T_i_nbr}). If $sq < 2|U|$ then we add sufficiently many `dummy' sets of size $s$. Let $T \subseteq U$ be the obtained set of size
	\begin{equation} \label{eq:small_T}
		|T| \le K \frac{|U|}{s} \log \left( \frac{qs}{|U|} \right) < K \frac{n}{\zeta td} \log\left( \frac{2rs}{n} \right) < \zeta t.
	\end{equation}
	For every $i \in [q]$ we have that $T$ is disjoint from $T_i$ and there is an edge between them. The only thing that prevents us from declaring $T$ as the new set $T_{q+1}$ in the partition (after removing it from $U$) is that we are not able guarantee $|N(T)| \ge (1/2 + 2\zeta)td$. In fact, as we have chosen $T$ to be rather small this cannot possibly be. However, even if had chosen $T$ to be of size $t$ such a property would not necessarily hold. To fix this, we use that $|T| < \zeta t$ leaves us plenty of space to extend it to a superset $T_{q+1}$ which satisfies this inequality. By Lemma \ref{lemma:conn_expand} $h_{\eps n / 2}(G_U) \ge (1/2 + 3\zeta)d$, there exists a connected subset $T' \subseteq U$ of size $t = o(n/d)$ such that $T'$ contains some vertex from $T$ and 
	\[
		|N(T') \cap U| \ge t\left( (1/2 + 3\zeta)d - 1 \right) \ge t(1/2 + 2.5\zeta)d.
	\]
	It is worth noting that $t = o(n/d)$ is the only place so far where we used an upper bound on $d$, and the previous inequality is the only place where we used $d \ge d_0$ (for some sufficiently large $d_0 = d_0(\zeta)$). 

	The set $T_{q+1} = T \cup T'$ is still disjoint from all the sets $T_i$ for $i \in [q]$ and satisfies
	\[
		|N(T_{q+1})| \ge |N(T_{q+1}) \cap U)| \ge |N(T') \cap U)| - |T| \ge (1/2 + 2\zeta)td.
	\]
	By removing $T_{q+1}$ from $U$ and increasing $q$ we obtain a new partition which by construction satisfies all the constraints, the set $D$ did not change and $q$ has increased; thus a contradiction. This proves that $q \ge r$. 

	\emph{Remark:} Application of Lemma \ref{lemma:conn_expand} explains how we benefit from stronger expansion properties of small subsets. Namely, $h_{\eps n/2}(G_U) \ge (1/2 + 3\zeta)d$ and Lemma \ref{lemma:conn_expand} gave us a connected set $T'$ which has large external neighbourhood, and in particular large enough satisfy the property \ref{p:connected}. The property \ref{p:connected} is necessary for the equation \eqref{eq:reason} to hold which, in turn, shows that \eqref{eq:T_i_nbr} holds. Without the bound from \eqref{eq:T_i_nbr} we would not be able to take $s = \Theta(td)$ which would, finally, result in a larger set $T$ in \eqref{eq:small_T} than what we need.

	\paragraph{Intermediate case $n^{0.5} \le d \le \gamma n$.} We borrow an idea from the proof of \cite[Theorem 4.4]{krivelevich2009minors}: in order to obtain a desired bound on $\ccl(G)$ it suffices to show that $G$ contains a minor of a graph $H$ with $r = \Theta(\sqrt{nd})$ vertices and average degree at least $0.1 r$. By the Kostochka-Thomason result such a graph $H$ contains a complete minor of order $\sqrt{nd / \log n}$ thus, as $\log n = \Theta(\log d)$, we obtain the desired bound.

	We show that $G$ contains a minor of such a graph $H$ by modifying the proof of the sparse case as follows. Instead of asking in \ref{p:minor} that between every $T_i$ and $T_j$ there exists an edge, we ask that there are, say, $0.1 q^2$ pairs $(T_i, T_j)$ for which this holds. Consequently, for this it suffices to take $t = \Theta(\sqrt{n/d})$.

	The argument now remains the same until the point where we seek a contradiction with the maximality of $q$. The crucial observation is that we do not need to find a set $T$ in $U$ which intersects every $N(T_i) \cap U$, but only a $0.1$-fraction of them. A simple application of Lemma \ref{lemma:random_walk_miss_U} gives such a set of size $O(\sqrt{n/d})$. Exactly the same as in the proof of the sparse case, we extend such $T$ to $T_{q+1}$ by finding a connected subset $T'$ which has good vertex expansion properties. Here it is crucial that $t$ is much smaller than $n/d$, which we can achieve by choosing $\gamma$ to be sufficiently small. We leave the details to the reader.
\end{proof}

The proof of Theorem \ref{thm:constant_d} follows the same lines as the presented proof, thus we give a short sketch of the necessary changes. For the full proof see \cite{krivelevichExpanders}.

\begin{proof}[Proof of Theorem \ref{thm:constant_d} (sketch)]
	Define both $t$ and $r$ to be of order $\sqrt{n}$. In the definition of $\mathcal{P}$, we drop the requirement $|N(T_i)| \ge t(1/2 + 2\zeta)d$ in \ref{p:connected}, and in \ref{p:D_prop} we only require $e(D, U) < \eps |D|d$. The rest of the argument proceeds in the same way, with some small further changes. In particular, we replace \eqref{eq:T_i_nbr} by a weaker condition
	\[
		e(T_i,  U) > \eps t d.
	\]
	If this is not true then we can safely move $T_i$ to $D$ and $e(D, U) < \eps |D|d$ would remain satisfied. Such a lower bound on the number of edges and maximum degree imply $|N(T_i) \cap U| \ge \eps t$, which allows us to take only $s = \Theta(t)$, instead of $s = \Theta(td)$. Lemma \ref{lemma:covering} then gives us a set $T$ of size 
	\[
		O\left (\frac{n}{t} \log \frac{rs}{n} \right) = O(\sqrt{n}),
	\]
	and we completely omit the use of Lemma \ref{lemma:conn_expand} as we no longer require $T_{q+1}$ to have (exceptionally) large external neighbourhood.
\end{proof}

\section{Concluding remarks}

We showed that a good edge expander contains a large complete minor. As a corollary, we determined the size of a largest complete minor in random $d$-regular graphs and some families of pseudo-random graphs. These results extend and improve some of the previously known results in a unified way. 

Without any doubt, Theorem \ref{thm:main} would be aesthetically more appealing if the same order of $\ccl(G)$ would hold without the stronger edge expansion assumption on (very) unbalanced cuts. Of course, having the present proof of Theorem \ref{thm:main} in mind one can also come up with different assumptions which would work. For example, $h(G) \ge \eps d$ and every set of size $O(\sqrt{n \log d / d})$ sends at most $o(\sqrt{nd \log d})$ edges to every set of size $\Theta(\sqrt{nd \log d})$. From the point of view of given applications such an assumption would suffice, however it feels more artificial compared to $h_{\eps n}(G) > (1/2 + \eps)d$. That being said, we ask the following question.

\begin{question} \label{q:1}
	Let $G$ be a graph with $n \ge n_0$ vertices, maximum degree at most $d = d(n) \ge 3$, and $h(G) \ge \eps d$, for some constant $\eps > 0$. Is it true that $\ccl(G) = \Omega \left( \sqrt{nd / \log d} \right)$?	
\end{question}

In other words, Question \ref{q:1} asks if one can strengthen the conclusion of Theorem \ref{thm:constant_d} to match the one of Theorem \ref{thm:main}. 

A possible further direction would be to weaken the assumption that the maximum degree and $h(G)$ are of the same order. Unfortunately, such a weakening is no longer sufficient to guarantee even a minor of order $\sqrt{n}$. This can be seen by the following example. Let $G$ be a complete bipartite graph where one side has $n$ and the other side has $n^{1/3}$ vertices. Even though this graph has a very large Cheeger constant, namely $h(G) = \Theta(n^{1/3})$, it does not contain a complete minor of size $n^{1/3} + 2$: every but at most one set $T_h$ has to contain a vertex from the smaller class. It is worth noting that this is in contrast with the case of vertex expansion. A result of Kawarabayashi and Reed \cite{kawarabayashi2010separator} shows that if a graph $G$ is such that every subset of vertices $S \subseteq V(G)$ of size $n/3 \le |S| \le 2n/3$, where $n$ is the number of vertices of $G$, has the external neighbourhood of size at least $\eps|S|$ for some $\eps > 0$, then $G$ contains a complete minor of order $\Omega(\eps \sqrt{n})$. This being said, it remains an interesting question to determine the correct dependency of $\ccl(G)$ on $h(G)$ and its maximum degree. Some dependency could be retrieved from the proof of Theorem \ref{thm:constant_d}, however we did not try to optimise it.

Another interesting class of graphs studied by Krivelevich and Sudakov \cite{krivelevich2009minors} are $K_{s,t}$-free graphs. While the proof from \cite{krivelevich2009minors} relies on establishing vertex expansion properties through edge expansion, it is plausible that using some of the ideas presented here could simplify their proof. However, as the optimal results in this direction were already obtained in \cite{krivelevich2009minors} and they do not seem to follow from our main theorems used as a black box, we did not pursue this.

We now discuss the algorithmic aspect of the problem and the proof of Theorem \ref{thm:main}. First, some background on the spectral graph theory (for a thorough introduction to the topic, see, e.g., \cite{chungSpectral}). Given a graph $G$, let $h'(G)$ be defined as follows:
\[
	h'(G) = \min_{\emptyset \neq S \subsetneq V(G)} \frac{e(S, V(G) \setminus S)}{\min\{ \vol(S), \vol(V(G) \setminus S) \}},
\]
where $\vol(S) = \sum_{v \in S} \deg(v)$. In case $G$ is a $d$-regular graph we have $h(G) = h'(G) / d$, and it is actually $h'(G)$ that some authors refer to as the Cheeger constant. While we could have stated Theorem \ref{thm:main} in terms of $h'(G)$, we have opted for $h(G)$ for convenience. The famous \emph{Cheeger inequality} for graphs \cite{alon1986eigenvalues,alon1985lambda1,dodziuk1984difference} states that
\begin{equation} \label{eq:cheeger_ineq}
	\lambda(G)/2 \le h'(G) \le \sqrt{2 \lambda(G)},
\end{equation}
where $\lambda(G)$ denotes the second smallest eigenvalue of the normalised Laplacian of $G$. A proof of the right hand side of \eqref{eq:cheeger_ineq} yields a polynomial time algorithm which produces a set $S \subseteq V(G)$ such that $\vol(S) \le \vol(V(G))/2$ and $e(S, V(G) \setminus S) \le \vol(S) \sqrt{2 \lambda(G)}$. 

We can now describe (somewhat informally) a randomized algorithm which, given a graph $G$ satisfying assumptions of Theorem \ref{thm:main}, with high probability succeeds in finding a $K_r$-minor in $G$, for some $r = \Theta(\sqrt{nd / \log d})$. We only do it for the sparse case. The other two cases depend on the algorithmic aspects of the Kostochka-Thomason proof, which goes beyond the scope of this paper. The numbers used in the algorithm are (almost) the same as in the proof. Start with a partition $D = \emptyset$, $q = 0$ and $U = V(G)$ and in each iteration do the following until $q = r$:
\begin{itemize}
	\item If there exists $i \in [q]$ such that $T_i$ has less than $t(1/2 + 2\zeta)d$ neighbours in $U$ then move it to $D$ and decrease $q$; 

	\item If there exists a vertex in $U$ with less than $\zeta d$ neighbours in $U$ then move it to $D$;

	\item If $\lambda(G[U]) \le \zeta^2 / 2$ then we can find a subset $S \subseteq U$ such that $\vol_{G[U]}(S) \le \vol_{G[U]}(U)/2$ and $e(S, U \setminus S) \le \vol(S) \sqrt{2 \lambda(G[U])}$. If $|S| \le |U|/2$ then $e(S, U \setminus S) \le |S| \zeta d$, and otherwise $e(U \setminus S, S) \le |U \setminus S| \zeta d$. In any case, we can efficiently find a subset $S' \subseteq U$ such that $|S'| \le |U|/2$ and $e(S', U \setminus S') < |S'| \zeta d$. Move such $S'$ from $U$ to $D$.

	\item Otherwise, if $\lambda(G[U]) \ge \zeta^2/2$ then by \eqref{eq:cheeger_ineq} we have $h'(G[U]) \ge \zeta^2 / 4$. As the minimum degree of $G[U]$ is at least $\zeta d$ we have $\vol_{G[U]}(S) \ge \zeta |S| d$ for every $S \subseteq U$, thus $h(G[U]) \ge \zeta h'(G[U]) d \ge \zeta^3 d / 4$. This is somewhat weaker than what we had in the proof, but it nonetheless suffices. 

	We can now apply the algorithm described in Lemma \ref{lemma:covering}. Run a lazy random walk of length $\ell$ in $G[U]$ and for each $i \in [q]$ take a shortest path from $N(T_i) \cap U$ to this walk. Let $T$ denote the resulting subset of vertices. The expected size of $T$ is at most, say, $\zeta t/2$, for $t$ chosen to be twice of what we used in the proof, hence with probability at least $1/2$ we have $|T| \le \zeta t$. By repeating this procedure $C \log n$ times, with probability at least $1 - 1/n^C$ we find a desired connected subset $T$ of size at most $\zeta t$.

	Finally, apply the argument from the proof of Lemma \ref{lemma:conn_expand} to either find a subset $T' \subseteq U$ of size $t$ which has external neighbourhood of size $t((1/2 + 2.5\zeta)d$, or a subset $S \subseteq U$ of size $|S| \le \eps n / 2$ such that $e(S, U \setminus S) < (1/2 + 3\zeta)d$. In the former case form a new set $T_{q+1} = T \cup T'$ and increase q. In the latter move $S$ to $D$.
\end{itemize}
The proof of Theorem \ref{thm:main} shows that indeed at some point we either terminate with a failure or we get $q = r$. In each iteration either $U$ decreases or $D$ increases, thus the whole procedure finishes after at most $2n$ iterations. The probability of a failure is therefore at most $1/n^{C}$, for any constant $C$ of our choice. 

\bibliographystyle{abbrv}
\bibliography{references}

\begin{thebibliography}{10}

\bibitem{alon1986eigenvalues}
N.~Alon.
\newblock Eigenvalues and expanders.
\newblock {\em Combinatorica}, 6(2):83--96, 1986.

\bibitem{alon1985lambda1}
N.~Alon and V.~D. Milman.
\newblock $\lambda_1$, isoperimetric inequalities for graphs, and
  superconcentrators.
\newblock {\em Journal of Combinatorial Theory, Series B}, 38(1):73--88, 1985.

\bibitem{alon1990separator}
N.~Alon, P.~Seymour, and R.~Thomas.
\newblock A separator theorem for nonplanar graphs.
\newblock {\em Journal of the American Mathematical Society}, 3(4):801--808,
  1990.

\bibitem{alon2016probabilistic}
N.~Alon and J.~H. Spencer.
\newblock {\em The probabilistic method}.
\newblock John Wiley \& Sons, 2016.

\bibitem{appel1976every}
K.~Appel and W.~Haken.
\newblock Every planar map is four colorable.
\newblock {\em Bulletin of the American Mathematical Society}, 82(5):711--712,
  1976.

\bibitem{bollobas1988isoperimetric}
B.~Bollob{\'a}s.
\newblock The isoperimetric number of random regular graphs.
\newblock {\em European Journal of Combinatorics}, 9(3):241--244, 1988.

\bibitem{bollobas80almost}
B.~Bollob\'as, P.~A. Catlin, and P.~Erd\H{o}s.
\newblock Hadwiger's conjecture is true for almost every graph.
\newblock {\em European Journal of Combinatorics}, 1(3):195--199, 1980.

\bibitem{broder1998optimal}
A.~Z. Broder, A.~M. Frieze, S.~Suen, and E.~Upfal.
\newblock Optimal construction of edge-disjoint paths in random graphs.
\newblock {\em SIAM Journal on Computing}, 28(2):541--573, 1998.

\bibitem{chungSpectral}
F.~R.~K. Chung.
\newblock {\em Spectral graph theory}.
\newblock CBMS Regional Conf. Ser. Math. 92. American Mathematical Society,
  Providence, 1997.

\bibitem{diestel2004dense}
R.~Diestel and C.~Rempel.
\newblock Dense minors in graphs of large girth.
\newblock {\em Combinatorica}, 25(1):111--116, 2004.

\bibitem{dodziuk1984difference}
J.~Dodziuk.
\newblock Difference equations, isoperimetric inequality and transience of
  certain random walks.
\newblock {\em Transactions of the American Mathematical Society},
  284(2):787--794, 1984.

\bibitem{drier2006minors}
Y.~Drier and N.~Linial.
\newblock Minors in lifts of graphs.
\newblock {\em Random Structures \& Algorithms}, 29(2):208--225, 2006.

\bibitem{fountoulakis2008order}
N.~Fountoulakis, D.~K{\"u}hn, and D.~Osthus.
\newblock The order of the largest complete minor in a random graph.
\newblock {\em Random Structures \& Algorithms}, 33(2):127--141, 2008.

\bibitem{fountoulakis2009minors}
N.~Fountoulakis, D.~K{\"u}hn, and D.~Osthus.
\newblock Minors in random regular graphs.
\newblock {\em Random Structures \& Algorithms}, 35(4):444--463, 2009.

\bibitem{frieze2015introduction}
A.~Frieze and M.~Karo{\'n}ski.
\newblock {\em Introduction to random graphs}.
\newblock Cambridge University Press, 2015.

\bibitem{kawarabayashi2010separator}
K.-i. Kawarabayashi and B.~Reed.
\newblock A separator theorem in minor-closed classes.
\newblock In {\em Foundations of Computer Science (FOCS), 2010 51st Annual IEEE
  Symposium on}, pages 153--162. IEEE, 2010.

\bibitem{kostochka1984lower}
A.~V. Kostochka.
\newblock Lower bound of the {H}adwiger number of graphs by their average
  degree.
\newblock {\em Combinatorica}, 4(4):307--316, 1984.

\bibitem{krivelevichExpanders}
M.~Krivelevich.
\newblock Expanders – how to find them, and what to find in them.
\newblock In {\em Surveys in Combinatorics 2019}.
\newblock to appear.

\bibitem{krivelevich2009minors}
M.~Krivelevich and B.~Sudakov.
\newblock Minors in expanding graphs.
\newblock {\em Geometric and Functional Analysis}, 19(1):294--331, 2009.

\bibitem{krivelevich2001random}
M.~Krivelevich, B.~Sudakov, V.~H. Vu, and N.~C. Wormald.
\newblock Random regular graphs of high degree.
\newblock {\em Random Structures \& Algorithms}, 18(4):346--363, 2001.

\bibitem{kuhn2003minors}
D.~K{\"u}hn and D.~Osthus.
\newblock Minors in graphs of large girth.
\newblock {\em Random Structures \& Algorithms}, 22(2):213--225, 2003.

\bibitem{kuhn2004complete}
D.~K{\"u}hn and D.~Osthus.
\newblock Complete minors in ${K}_{s, s'}$-free graphs.
\newblock {\em Combinatorica}, 25(1):49--64, 2004.

\bibitem{liebenau2017asymptotic}
A.~Liebenau and N.~Wormald.
\newblock Asymptotic enumeration of graphs by degree sequence, and the degree
  sequence of a random graph.
\newblock {\em arXiv preprint arXiv:1702.08373}, 2017.

\bibitem{mckay1990asymptotic}
B.~D. McKay and N.~C. Wormald.
\newblock Asymptotic enumeration by degree sequence of graphs of high degree.
\newblock {\em European Journal of Combinatorics}, 11(6):565--580, 1990.

\bibitem{mckay1991asymptotic}
B.~D. McKay and N.~C. Wormald.
\newblock Asymptotic enumeration by degree sequence of graphs with degrees
  $o(n^{1/2})$.
\newblock {\em Combinatorica}, 11(4):369--382, 1991.

\bibitem{mossel2006non}
E.~Mossel, R.~O'Donnell, O.~Regev, J.~E. Steif, and B.~Sudakov.
\newblock Non-interactive correlation distillation, inhomogeneous {M}arkov
  chains, and the reverse {B}onami-{B}eckner inequality.
\newblock {\em Israel Journal of Mathematics}, 154(1):299--336, 2006.

\bibitem{plotkin1994shallow}
S.~Plotkin, S.~Rao, and W.~D. Smith.
\newblock Shallow excluded minors and improved graph decompositions.
\newblock In {\em Proceedings of the fifth annual ACM-SIAM symposium on
  Discrete algorithms}, pages 462--470. Society for Industrial and Applied
  Mathematics, 1994.

\bibitem{sinclair1989approximate}
A.~Sinclair and M.~Jerrum.
\newblock Approximate counting, uniform generation and rapidly mixing {M}arkov
  chains.
\newblock {\em Information and Computation}, 82(1):93--133, 1989.

\bibitem{thomason1984extremal}
A.~Thomason.
\newblock An extremal function for contractions of graphs.
\newblock In {\em Mathematical Proceedings of the Cambridge Philosophical
  Society}, volume~95, pages 261--265. Cambridge University Press, 1984.

\bibitem{thomason1987pseudo}
A.~Thomason.
\newblock Pseudo-random graphs.
\newblock In {\em North-Holland Mathematics Studies}, volume 144, pages
  307--331. Elsevier, 1987.

\bibitem{thomason2001extremal}
A.~Thomason.
\newblock The extremal function for complete minors.
\newblock {\em Journal of Combinatorial Theory, Series B}, 81(2):318--338,
  2001.

\bibitem{thomassen1983girth}
C.~Thomassen.
\newblock Girth in graphs.
\newblock {\em Journal of Combinatorial Theory, Series B}, 35(2):129--141,
  1983.

\bibitem{tikhomirov2016spectral}
K.~Tikhomirov and P.~Youssef.
\newblock The spectral gap of dense random regular graphs.
\newblock {\em The Annals of Probability}, 47(1):362--419, 2019.

\bibitem{toft1996survey}
B.~Toft.
\newblock A survey of {H}adwiger's conjecture.
\newblock {\em Congressus Numerantium}, pages 249--283, 1996.

\end{thebibliography}

\end{document}